     \def\section{\@startsection{section}{1}%
     \z@{.7\linespacing\@plus\linespacing}{.5\linespacing}%
     {\bfseries
     \centering
     }}
     \def\@secnumfont{\bfseries}
\newtheorem{theorem}{Theorem}[section]
\newtheorem{lemma}[theorem]{Lemma}
\newtheorem{corollary}[theorem]{Corollary}
\theoremstyle{definition}
\newtheorem{definition}[theorem]{Definition}
\newtheorem{example}[theorem]{Example}
\theoremstyle{remark}
\newtheorem{remark}[theorem]{Remark}
\numberwithin{equation}{section}
\newcommand{\dd}{\,\mathrm{d}}
\newcommand{\sd}{\circ \mathrm{d}}
\newcommand{\DD}{\mathrm{D}}
\newcommand{\R}{\mathbb{R}}
\newcommand{\CC}{\mathscr{C}}
\newcommand{\SP}{\mathscr{S}}
\newcommand{\pr}{\prime}
\newcommand{\gint}{\gamma\text{-}\int}
\newcommand{\1}{\mathbf{1}}
\begin{document}

\title[Existence of L\'evy's area and pathwise integration]{Existence of L\'evy's area and pathwise integration}

\author[Imkeller]{Peter Imkeller}
\address{Peter Imkeller: Institut f\"ur Mathematik, Humboldt-Universit\"at zu Berlin, Berlin, 10099 Berlin, Germany}
\email{imkeller@math.hu-berlin.de}

\author[Pr\"omel]{David J. Pr\"omel}
\address{David J. Pr\"omel: Institut f\"ur Mathematik, Humboldt-Universit\"at zu Berlin, Berlin, 10099 Berlin, Germany}
\email{proemel@math.hu-berlin.de}

\subjclass[2010]{Primary 26A42, 60H05; Secondary 46N30.}

\keywords{F\"ollmer integration; Functional calculus; It\^o formula; L\'evy's area; Rough path; Stratonovich integral.}

\begin{abstract}
  Rough path analysis can be developed using the concept of controlled paths, and with respect to a topology in which L\'evy's area plays a role. For vectors of irregular paths we investigate the relationship between the property of being controlled and the existence of associated L\'evy areas. For two paths, one of which is controlled by the other, a pathwise construction of the L\'evy area and therefore of mutual stochastic integrals is possible. If the existence of quadratic variation along a sequence of partitions is guaranteed, this leads us to a study of the pathwise change of variable (It\^o) formula in the spirit of F\"ollmer, from the perspective of controlled paths.
\end{abstract}

\maketitle

\section{Introduction}

The theory of rough paths (see \cite{Lyons2007,Lejay2009,Friz2013}) has established an analytical frame in which stochastic differential and integral calculus beyond Young's classical notions is traced back to properties of the trajectories of processes involved without reference to a particular probability measure. For instance, in the simplest non-trivial setting it provides a topology on the set of continuous functions enhanced with an ``area'', with respect to which the (It\^o) map associating the trajectories of a solution process of a stochastic differential equation driven by trajectories of a continuous martingale is continuous. In this topology, convergence of a sequence of functions $X^n = (X^{1,n},\dots,X^{d,n})_{n\in\mathbb{N}}$ defined on the time interval $[0,T]$ involves besides uniform convergence also the convergence of the L\'evy areas associated to the vector of trajectories, formally given by
\begin{equation*}
  \mathbb{L}^{i,j,n}_t := \int_0^t (X_s^{i,n} \dd X_s^{j,n} - X_s^{j,n} \dd X_s^{i,n}),\quad 1\le i,j\le d,\quad t\in[0,T].
\end{equation*}

In probability theory the concept of L\'evy's area was already studied in the 1940s. It was first introduced by P. L\'evy in \cite{Levy1940} for a two dimensional Brownian motion $(B^1,B^2)$. For time $T$ fixed and any trajectory of the process it is defined as the area enclosed by the trajectory $(B^1, B^2)$ and the chord given by the straight line from $(0,0)$ to $(B^1_T, B^2_T)$, and may be expressed formally by
\begin{equation*}
  \frac{1}{2}\bigg( \int_0^T B_t^1 \dd B_t^2 - \int_0^T B_t^2 \dd B_t^1\bigg ),
\end{equation*}
provided the integrals make sense.

More recently, an alternative calculus with a more Fourier analytic touch has been designed (see \cite{Gubinelli2014, Perkowski2013a}) in which an older idea by Gubinelli \cite{Gubinelli2004} is further developed. It is based on the concept of \emph{controlled paths}. In this calculus, rough path integrals are described in terms of Fourier series for instance in the Haar-Schauder wavelet, and are seen to decompose into different parts, one of them representing L\'evy's area. The existence of a stochastic integral in this approach is seen to be linked to the existence of the corresponding L\'evy area, and both can be approximated along a Schauder development in which H\"older functions are limits of their finite degree Schauder expansions. In its simplest (one-dimensional) form a path of bounded variation $Y$ on $[0,T]$ is \emph{controlled by} another path $X$ of bounded variation on $[0,T]$, if the associated signed measures $\mu_X, \mu_Y$ on the Borel sets of $[0,T]$ satisfy that $\mu_Y$ is absolutely continuous with respect to $\mu_X$. In its version relevant here two rough (vector valued) functions $X$ and $Y$ on $[0,T]$ are considered, both with finite $p$-variation for some $p\ge 1$. In the simplest setting, $Y$ is \emph{controlled by} $X$ if there exists a function $Y'$ of finite $p$-variation such that the first order Taylor expansion errors
\begin{equation*}
  R^Y_{s,t} = Y_t-Y_s - Y'_s (X_t-X_s)
\end{equation*}
are bounded in a suitable semi-norm, i.e. $\sum_{[s,t]\in\pi} |R^Y_{s,t}|^r$ is bounded over all possible partitions $\pi$ of $[0,T]$. Here $\frac{1}{r} = \frac{2}{p}.$ Since for a path $X$ H\"older continuity of order $\frac{1}{p}$ is closely related to finite $p$-variation, the control relation can be seen as expressing a type of fractional Taylor expansion of first order: the first order Taylor expansion error of $Y$ with respect to $X$ - both of H\"older order $\frac{1}{p}$ and ``derivative'' $Y'$ - is of double H\"older order $\frac{2}{p}.$ In its \emph{para-controlled} refinement as developed by Gubinelli et al. in \cite{Gubinelli2013} this notion has been seen to give an alternative approach to classical rough path analysis and is suitable for the application to singular PDEs. In the comparison of the two approaches, to make the It\^o map continuous, information stored in the L\'evy areas of vector valued paths has to be complemented by information conveyed by path control or vice versa. This raises the problem about the relationship between the existence of L\'evy's area and the control relationship between vector trajectories or the components of such. We shall deal with this fundamental problem in Section \ref{sec:levy}.

Based on this study we then decompose Riemann approximations of different versions of integrals into a symmetric and an antisymmetric component and prove that for the classical Stratonovich integral just the antisymmetric Riemann sums have to converge, while for more general Stratonovich or It\^o type integrals the existence of limits for the symmetric part has to be guaranteed along fixed sequences of partitions, as in F\"ollmer's approach \cite{Follmer1979}. Under this assumption we additionally derive a pathwise version of a functional It\^o formula due to \cite{Ahn1997}, where the functional has to be just defined on the space of continuous functions. At this point our It\^o formula circumvents a technical problem of Dupire differentiability (see \cite{Dupire2009,Cont2013}), where the functional has to be defined for c\`adl\`ag functions as well.

The paper is organized as follows. In Section \ref{sec:levy} we show that for a vector $X$ of functions a particular version of control, which we will call self-control, is sufficient for the pathwise existence of the L\'evy areas. An example of two functions is given which are not mutually controlled and for which consequently L\'evy's area fails to exist. In Section \ref{sec:foll} we study the question how control concepts and the existence of different kinds of integrals (It\^o type, Stratonovich type) are related, and in particular in which way control leads to versions of F\"ollmer's pathwise It\^o formula. Finally, provided the quadratic variation exists, we present a pathwise version of a functional It\^o's formula in Section \ref{sec:functional}.

\section{L\'evy's area and controlled paths}\label{sec:levy}

It is well-known that both the control of a path $Y$ with respect to another path $X$, as well as the existence of L\'evy's area for $X$ entails the existence of the rough path integral of $Y$ with respect to $X$ (Lyons \cite{Lyons1998}). This raises the question about the relative power of the hypotheses leading to the existence of the integral. This question will be answered here. We will show that control entails the existence of L\'evy's area. The analysis we present, as usual, is based on $d$-dimensional irregular paths, and corresponding notion of areas. For a continuous path $X\colon [0,T] \to \R^d$, say $X= (X^1,\dots, X^d)^*$, we recall that \textit{L\'evy's area} $\mathbb{L}(X)=(\mathbb{L}^{i,j}(X))_{i,j}$ is given by
\begin{align*}
  \mathbb{L}(X)^{i,j}:= \int_0^T X_t^i \dd X_t^j - \int_0^T X_t^j \dd X_t^i , \quad 1 \leq i,j \leq d,
\end{align*}
where $X^*$ denotes the transpose of the vector $X$, if the respective integrals exist. There are pairs of H\"older continuous paths $X^1$ and $X^2$ for which L\'evy's area does not exist (see Example \ref{ex:levy} below). To answer this question, we need the basic setup of rough path analysis, starting with the notion of power variation.

A \textit{partition} $\pi := \{ [t_{i-1}, t_i]\,:\, i=1,\dots,N\}$ of an interval $[0,T]$ is a family of essentially disjoint intervals such that $\bigcup_{i=1}^N [t_{i-1},t_i]=[0,T]$. For any $1 \leq p < \infty$, a continuous function $X \colon [0,T] \to \R^d$ is of finite $p$\textit{-variation} if
\begin{equation*}
  \vert\vert X \vert\vert_{p} := \sup_{\pi\in \mathcal{P}} \bigg ( \sum_{[s,t] \in \pi} \vert X_{s,t} \vert^p \bigg )^\frac{1}{p} < \infty,
\end{equation*}
where the supremum is taken over the set $\mathcal{P}$ of all partitions of $[0,T]$ and $X_{s,t}:= X_t-X_s$ for $s,t\in [0,T]$, $s\le t$. We write $\mathcal{V}^p([0,T],\R^d)$ for the set (linear space) of continuous functions of finite $p$-variation. Let, more generally, $R \colon [0,T]^2 \to \R^{d\times d}$ be a continuous function. In this case we consider the functional
\begin{align*}
  \vert \vert R \vert\vert_r := \sup_{\pi\in \mathcal{P}} \bigg ( \sum_{[s,t] \in \pi} \vert R_{s,t} \vert^{r} \bigg )^\frac{1}{r}, \quad 1\le r<\infty.
\end{align*}
An equivalent way to characterize the property of finite $p$-variation is by the existence of a control function. Denoting by $\Delta_T := \{ (s,t) \in [0,T]^2 \, : \, 0 \leq s \leq t \leq T \}$, we call a continuous function $\omega \colon \Delta_T \to \mathbb{R}^+$ vanishing on the diagonal \textit{control function} if it is superadditive, i.e. if for $(s,u,t)\in[0,T]^3$ one has $\omega(s,u) + \omega (u,t) \leq \omega(s,t)$ for $\quad 0 \leq s \leq u \leq t \leq T$.
Note that a function is of finite $p$-variation if and only if there exists a control function $\omega$ such that $\vert X_{s,t} \vert^p \leq \omega (s,t)$ for $(s,t)\in \Delta_T$. For a more detailed discussion of $p$-variation and control functions see Chapter 1.2 in \cite{Lyons2007}. For later reference we remark that all objects are analogously defined for general Banach spaces instead of $\R^d$. 

A fundamental insight due to Gubinelli \cite{Gubinelli2004} was that an integral $\int Y \dd X$ exists if ``$Y$ looks like $X$ in the small scale''. This leads to the concept of controlled paths, which we recall in its general form.

\begin{definition}
  Let $p,q,r \in \R^+$ be such that $2/p + 1/q > 1$ and $1/r = 1/p + 1/q$. Suppose $X \in \mathcal{V}^{p}([0,T],\R^d)$. We call $Y\in \mathcal{V}^{p}([0,T],\R^d)$ \textit{controlled} by $X$ if there exists $Y'\in \mathcal{V}^{q} ([0,T],\R^{d\times d})$ such that the remainder term $R^Y$ given by the relation $Y_{s,t}= Y_s^\prime X_{s,t} + R^Y_{s,t}$ satisfies $\vert\vert R^Y \vert \vert_{r} < \infty$. In this case we write $Y \in \CC_X^{q}$, and call $Y'$ \emph{Gubinelli derivative}.
\end{definition}

See Theorem 1 in \cite{Gubinelli2004} for the case of H\"older continuous paths, or Theorem 4.9 in \cite{Perkowski2013} for precise existence results of $\int Y \dd X$. Let us now modify this concept to a notion of control of a path by itself.

\begin{definition}
  Let $p,q,r \in \R^+$ be such that $2/p + 1/q > 1$ and $1/r = 1/p + 1/q$. We call $X \in \mathcal{V}^{p}([0,T],\R^d)$ \textit{self-controlled} if we have $X^i \in \CC_{X^j}^{q}$ or $X^j \in \CC_{X^i}^{q}$ for all $1\leq i, j \leq d$ with $i\neq j$.
\end{definition}

With this notion we are now able to deal with the main task of this section, the construction of the L\'evy area of a self-controlled path $X$. In fact, the integrals arising in L\'evy's area will be obtained via left-point Riemann sums as
\begin{equation}\label{eq:levy1}
  \mathbb{L}(X)^{i,j}= \int_0^T X^i_t \dd X^j_t - \int_0^T X_t^j \dd X^i_t := \lim_{\vert \pi \vert \to 0} \sum_{[s,t] \in \pi} ( X_s^i X^j_{s,t} - X_s^j X^i_{s,t}),
\end{equation}
for $1\leq i,j \leq d$, where $|\pi|$ denotes the mesh of a partition $\pi$. Our approach uses the abstract version of classical ideas due to Young \cite{Young1936} comprised in the so-called \textit{sewing lemma}.

\begin{lemma}\label{lem:young}[Corollary 2.3, Corollary 2.4 in \cite{Feyel2006}]
  Let $\Xi\colon \Delta_T \to \mathbb{R}^d$ be a continuous function and $K>0$ some constant. Assume that there exist a control function $\omega$ and a constant $\theta >1$ such that for all $(s,u,t)\in[0,T]^3$ with $0\leq s \leq u \leq t \leq T$ we have
  \begin{equation}\label{eq:young}
    \vert \Xi_{s,t} - \Xi_{s,u}- \Xi_{u,t}\vert\leq K \omega(s,t)^\theta.
  \end{equation}
  Then there exists a unique function $\Phi\colon [0,T] \to \R^d $ such that $\Phi(0)=0$ and
  \begin{equation*}
    \vert \Phi(t)-\Phi(s)- \Xi_{s,t}  \vert \leq C(\theta)\omega(s,t)^\theta \quad \text{and }\lim_{\vert \pi(s,t) \vert \to 0} \sum_{[u,v] \in \pi(s,t)}  \Xi_{u,v} = \Phi(t)-\Phi(s),
  \end{equation*}
  for $(s,t)\in \Delta_T$, where $C(\theta):= K (1-2^{1-\theta})^{-1}$ and $\pi(s,t)$ denotes a partition of $[s,t]$.
\end{lemma}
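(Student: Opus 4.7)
The plan is to construct $\Phi$ as a limit of Riemann-type sums $S(\pi) := \sum_{[u,v] \in \pi} \Xi_{u,v}$ over partitions of subintervals $[s,t]$. I would first produce $\Phi(s,t)$ along a carefully chosen refining sequence, establish an a priori estimate $|S(\pi) - \Xi_{s,t}| \le C(\theta) \omega(s,t)^\theta$, then promote convergence to arbitrary $\pi$ with $|\pi| \to 0$, set $\Phi(t) := \Phi(0,t)$ with $\Phi(0) := 0$, and finally close the argument with uniqueness.

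For the a priori estimate I would construct partitions by binary splittings that halve $\omega$. Superadditivity of $\omega$ (together with non-negativity) makes $\omega(s,\cdot)$ non-decreasing and $\omega(\cdot,t)$ non-increasing; combined with continuity of $\omega$ and the diagonal vanishing $\omega(u,u) = 0$, the intermediate value theorem yields for each $[u,v]$ a point $m \in [u,v]$ with $\omega(u,m), \omega(m,v) \le \omega(u,v)/2$. Iterating produces partitions $\pi^n$ of $[s,t]$ into $2^n$ subintervals whose $\omega$-values are each at most $2^{-n}\omega(s,t)$. The sewing hypothesis then bounds the difference between successive levels by $|S(\pi^{n+1}) - S(\pi^n)| \le 2^n K (2^{-n}\omega(s,t))^\theta = K 2^{n(1-\theta)} \omega(s,t)^\theta$, which is geometrically summable since $\theta > 1$. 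Hence $(S(\pi^n))_n$ is Cauchy with a limit $\Phi(s,t)$, and the telescoping series produces $|\Phi(s,t) - \Xi_{s,t}| \le K(1-2^{1-\theta})^{-1}\omega(s,t)^\theta = C(\theta)\omega(s,t)^\theta$.

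To extend convergence to an arbitrary partition $\pi$ of $[s,t]$, I would apply the same binary halving inside each $[u,v] \in \pi$ so that the associated refinement's limit equals $\sum_{[u,v] \in \pi} \Phi(u,v)$, and then estimate $|S(\pi) - \sum_{[u,v] \in \pi} \Phi(u,v)| \le C(\theta) \sum_{[u,v] \in \pi} \omega(u,v)^\theta \le C(\theta) (\max_\pi \omega)^{\theta - 1} \omega(s,t)$, which vanishes as $|\pi| \to 0$ by uniform continuity of $\omega$ on the compact set $\Delta_T$. Running the argument with $u$ already present in the initial partition gives additivity $\Phi(s,t) = \Phi(s,u) + \Phi(u,t)$, so setting $\Phi(t) := \Phi(0,t)$ yields $\Phi(t) - \Phi(s) = \Phi(s,t)$ and delivers both the error bound and the Riemann-sum limit in the conclusion.

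Uniqueness follows since, if $\tilde\Phi$ also satisfies $\tilde\Phi(0) = 0$ and the error bound, then for any partition $\pi$ of $[0,t]$ one telescopes $|\tilde\Phi(t) - \Phi(t)| = |\sum_\pi [(\tilde\Phi(v) - \tilde\Phi(u)) - (\Phi(v) - \Phi(u))]| \le 2C(\theta)\sum_\pi \omega(u,v)^\theta$, and the right-hand side tends to $0$ as $|\pi| \to 0$, forcing $\tilde\Phi = \Phi$. The main obstacle I anticipate is the binary halving construction in the a priori estimate: the bound $\omega(u,m), \omega(m,v) \le \omega(u,v)/2$ is what produces the precise constant $C(\theta) = K(1-2^{1-\theta})^{-1}$ via a convergent geometric series, and justifying it cleanly requires coordinating the superadditivity, continuity, and diagonal-vanishing properties of $\omega$.
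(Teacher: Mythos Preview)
The paper does not give its own proof of this lemma; it is quoted from Feyel--de La Pradelle and used as a black box in the proofs of Theorem~\ref{thm:levy1} and Theorem~\ref{thm:levy2}. Your plan reproduces essentially the argument of that reference: locate $\omega$-midpoints via the intermediate value theorem (using superadditivity, continuity, and the diagonal vanishing of $\omega$), refine dyadically to obtain the sharp constant $C(\theta)=K(1-2^{1-\theta})^{-1}$ from the geometric series $\sum_{n\ge 0}2^{n(1-\theta)}$, and deduce uniqueness by telescoping over a fine partition. So within the paper there is nothing to compare against, and your outline matches the cited source.

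One step in your plan is circular as written and deserves more care. The passage to additivity via ``running the argument with $u$ already present in the initial partition'' does not close: binary halving starting from $\{[s,u],[u,t]\}$ produces Riemann sums converging to $\Phi(s,u)+\Phi(u,t)$, but identifying this with $\Phi(s,t)$ presupposes that \emph{every} sequence of partitions of $[s,t]$ with vanishing mesh gives the same limit, which is exactly what you are trying to establish. Similarly, the bound $\lvert S(\pi)-\sum_\pi\Phi(u,v)\rvert\to 0$ does not by itself yield convergence of $S(\pi)$, since $\sum_\pi\Phi(u,v)$ varies with $\pi$ until additivity is known. The standard closure is to prove, independently, a maximal inequality $\lvert S(\rho)-\Xi_{s,t}\rvert\le C'\,\omega(s,t)^\theta$ for \emph{every} partition $\rho$ of $[s,t]$ (for instance via greedy point removal, with a possibly worse constant $C'$); this gives Cauchyness of the net $(S(\pi))_{|\pi|\to 0}$, hence a limit independent of the sequence, from which additivity is immediate, and your dyadic estimate then supplies the stated constant $C(\theta)$.
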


\begin{remark}
  For simplicity we state Lemma \ref{lem:young} only for a continuous function $\Xi \colon \Delta_T \to \R^d$. Yet, it still holds true without the continuity assumption and for a general Banach space replacing $\R^d$. See Theorem 1 and Remark 3 in \cite{Feyel2008}. Consequently, all results of this section extend to general Banach spaces.
\end{remark}

With this tool we now derive the existence of L\'evy's area for self-controlled paths of finite $p$-variation with $p\ge 1$.

\begin{theorem}\label{thm:levy1}
   Let $1 \leq p < \infty$ and suppose that $X \in \mathcal{V}^{p}([0,T],\R^d)$ is self-controlled, then L\'evy's area as defined in \eqref{eq:levy1} exists.
\end{theorem}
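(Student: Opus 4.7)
The strategy is to apply the sewing lemma (Lemma \ref{lem:young}) componentwise. Fix $i\neq j$ and define the two-parameter function
\begin{equation*}
  \Xi_{s,t}^{i,j} := X_s^i X_{s,t}^j - X_s^j X_{s,t}^i,\qquad (s,t)\in\Delta_T.
\end{equation*}
By \eqref{eq:levy1}, showing existence of $\mathbb{L}(X)^{i,j}$ amounts to showing that Riemann sums of $\Xi^{i,j}$ converge, which is exactly what the sewing lemma delivers. Since $\Xi^{i,j}=-\Xi^{j,i}$, the self-control hypothesis allows me to assume without loss of generality that $X^j \in \CC_{X^i}^{q}$, so that
\begin{equation*}
  X^j_{s,t} = (X^j)'_s \, X^i_{s,t} + R^{X^j}_{s,t}
\end{equation*}
with $(X^j)' \in \mathcal{V}^q$ and $\|R^{X^j}\|_r < \infty$, where $1/r = 1/p + 1/q$.

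The main computation is the cocycle defect of $\Xi^{i,j}$. A direct expansion cancels the terms involving $X_s^i, X_s^j$ and yields
\begin{equation*}
  \Xi_{s,t}^{i,j} - \Xi_{s,u}^{i,j} - \Xi_{u,t}^{i,j} = X_{s,u}^j X_{u,t}^i - X_{s,u}^i X_{u,t}^j.
\end{equation*}
Substituting the controlled-path expansions of $X^j_{s,u}$ and $X^j_{u,t}$, the leading bilinear contributions $(X^j)'_s X^i_{s,u} X^i_{u,t}$ and $(X^j)'_u X^i_{s,u} X^i_{u,t}$ partially cancel, leaving the crucial identity
\begin{equation*}
  \Xi_{s,t}^{i,j} - \Xi_{s,u}^{i,j} - \Xi_{u,t}^{i,j} = -(X^j)'_{s,u}\, X^i_{s,u} X^i_{u,t} + R^{X^j}_{s,u} X^i_{u,t} - X^i_{s,u} R^{X^j}_{u,t}.
\end{equation*}
The dangerous bilinear Young-type obstruction has been traded for products in which at least one factor lives in a strictly more regular space than $\mathcal{V}^p$.

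To quantify this, I will pick a single superadditive control function $\omega$ on $\Delta_T$ that simultaneously dominates $|X^i_{s,t}|^p$, $|(X^j)'_{s,t}|^q$ and $|R^{X^j}_{s,t}|^r$; such an $\omega$ exists by summing the standard control functions associated to each of the three finite-variation quantities. Using $1/r = 1/p+1/q$ together with the superadditivity inequalities $\omega(s,u),\omega(u,t)\le\omega(s,t)$, each of the three summands above is bounded by $\omega(s,t)^{\theta}$ with $\theta := 2/p + 1/q > 1$. Hence \eqref{eq:young} is satisfied, and Lemma \ref{lem:young} produces a function $\Phi^{i,j}\colon[0,T]\to\R$ with $\Phi^{i,j}(0)=0$ whose increments are the limits of Riemann sums of $\Xi^{i,j}$; setting $\mathbb{L}(X)^{i,j}:=\Phi^{i,j}(T)$ completes the proof.

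The main obstacle is precisely the cocycle-defect computation: for $p\ge 2$ the raw bilinear cross term $X_{s,u}^j X_{u,t}^i$ admits no direct Young bound, and it is the algebraic cancellation of the leading piece $(X^j)'X^i X^i$ triggered by the controlled-path substitution that moves the remaining factors into the better spaces $\mathcal{V}^q$ and finite $r$-variation, producing the Hölder-type exponent $2/p+1/q>1$ required by the sewing lemma for any $p\ge 1$.
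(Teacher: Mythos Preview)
Your proof is correct and follows essentially the same approach as the paper: define the same two-parameter germ $\Xi^{i,j}_{s,t}$, compute its cocycle defect $X^j_{s,u}X^i_{u,t}-X^i_{s,u}X^j_{u,t}$, substitute the controlled-path expansion to trade the raw bilinear term for products involving the derivative increment and the remainder, and then bound everything by a single control raised to the power $\theta=2/p+1/q>1$ so that the sewing lemma applies. The only cosmetic difference is that the paper assumes $X^i\in\CC^q_{X^j}$ whereas you assume $X^j\in\CC^q_{X^i}$, which amounts to swapping the roles of $i$ and $j$ and is covered by your antisymmetry remark.
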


\begin{proof}
 Let $X \in \mathcal{V}^p([0,T],\R^{d})$ for $1 \leq p < \infty$ be self-controlled and fix $1\leq i,j\leq d$, $i\not= j$. We may assume without loss of generality that $X^i \in \CC^q_{X^j}$, i.e. $X^i_{s,t}= X_s^\prime(i,j) X^j_{s,t} + R^{i,j}_{s,t}$ and $\vert\vert X^\prime(i,j) \vert \vert_{q} ,\vert\vert R^{i,j} \vert \vert_{r} < \infty$. In order to apply Lemma \ref{lem:young}, we set $\Xi_{s,t}^{i,j}:=  X^i_s X^j_{s,t} - X^j_s X^i_{s,t}$ for $(s,t)\in \Delta_T$ and observe that for $(s,u,t)\in[0,T]^3$ with $0\leq s \leq u \leq t \leq T$, we have
  \begin{align*}
    \Xi^{i,j}_{s,t} - \Xi^{i,j}_{s,u}- \Xi^{i,j}_{u,t}
      &= X^j_{s,u}X^i_{u,t}-X^i_{s,u} X^j_{u,t}\\
      &= X^j_{s,u}(X_u^\prime(i,j) X^j_{u,t}+ R^{i,j}_{u,t})-(X_s^\prime(i,j) X^j_{s,u}+ R^{i,j}_{s,u}) X^j_{u,t} \\
      &= X^j_{s,u}R^{i,j}_{u,t} - R^{i,j}_{s,u} X^j_{u,t} + ( X_u^\prime(i,j)-X_s^\prime(i,j)) X^j_{s,u}X^j_{u,t}.
  \end{align*}
  Since the finite sum of control functions is again a control function, we can choose the same control function $\omega$ for $X^j, X^\prime(i,j)$ and $R^{i,j}$, and setting $\theta := \frac{2}{p}+\frac{1}{q} >1$ we get
  \begin{align*}
    \vert \Xi_{s,t}^{i,j} - \Xi_{s,u}^{i,j}- \Xi_{u,t}^{i,j}\vert \leq \omega(s,t)^{\frac{1}{p}+\frac{1}{r}} + \omega(s,t)^{\frac{1}{p}+\frac{1}{r}} + \omega(s,t)^{\frac{2}{p}+\frac{1}{q}}
    \leq 3 \omega (s,t)^\theta.
  \end{align*}
\end{proof}

We will next show that Riemann sums with arbitrary choices of base points for the integrand functions lead to the same L\'evy area as just constructed.

\begin{lemma}
  Let $X \in \mathcal{V}^{p}([0,T],\R^d)$ for some $1 \leq p < \infty$. Suppose that $X$ is self-controlled. Denote by $s'\in[s,t]$ an arbitrary point chosen in a partition interval $[s,t]\in\pi$. Then L\'evy's area from the preceding theorem is also given by
  \begin{equation*}
    \mathbb{L}(X)^{i,j} = \lim_{\vert \pi\vert \to 0} \sum_{[s,t] \in \pi} ( X^i_{s^\pr} X^j_{s,t} -  X^j_{s^\pr} X^i_{s,t}), \quad 1 \leq i,j \leq d.
  \end{equation*}
\end{lemma}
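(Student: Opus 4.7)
The plan is to compare the Riemann sums with arbitrary base points to the left-point Riemann sums treated in Theorem~\ref{thm:levy1}, and show that the difference vanishes as $|\pi|\to 0$. Concretely, I would fix $1\le i,j\le d$ with $i\neq j$, and may again assume $X^i\in\CC^q_{X^j}$ with $X^i_{s,t}=X'_s(i,j)X^j_{s,t}+R^{i,j}_{s,t}$. The term-by-term difference at $[s,t]\in\pi$ between the two sums is
\begin{equation*}
  (X^i_{s^\pr}-X^i_s)X^j_{s,t}-(X^j_{s^\pr}-X^j_s)X^i_{s,t}
  =X^i_{s,s^\pr}X^j_{s,t}-X^j_{s,s^\pr}X^i_{s,t},
\end{equation*}
so it suffices to show that this expression, summed over $\pi$, tends to $0$ as $|\pi|\to 0$.

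Next I would insert the Gubinelli expansion into both $X^i_{s,s^\pr}$ and $X^i_{s,t}$. The $X'_s(i,j)$-terms are
$X'_s(i,j)X^j_{s,s^\pr}X^j_{s,t}-X^j_{s,s^\pr}X'_s(i,j)X^j_{s,t}$, which cancel exactly. What remains is
\begin{equation*}
  X^i_{s,s^\pr}X^j_{s,t}-X^j_{s,s^\pr}X^i_{s,t}=R^{i,j}_{s,s^\pr}X^j_{s,t}-X^j_{s,s^\pr}R^{i,j}_{s,t}.
\end{equation*}
This algebraic cancellation is the only slightly nonobvious step; it is the standard mechanism by which the Gubinelli derivative disappears from the leading order.

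The remaining task is to show $\sum_{[s,t]\in\pi}|R^{i,j}_{s,s^\pr}X^j_{s,t}|$ and $\sum_{[s,t]\in\pi}|X^j_{s,s^\pr}R^{i,j}_{s,t}|$ both vanish as $|\pi|\to 0$. Choose a common control function $\omega$ dominating $|X^j_{s,t}|^p$ and $|R^{i,j}_{s,t}|^r$ on $\Delta_T$ (the finite sum of control functions is again a control function). Since $s^\pr\in[s,t]$, monotonicity of $\omega$ in its arguments yields
\begin{equation*}
  |R^{i,j}_{s,s^\pr}X^j_{s,t}|+|X^j_{s,s^\pr}R^{i,j}_{s,t}|\le 2\,\omega(s,t)^{\frac{1}{r}+\frac{1}{p}}=2\,\omega(s,t)^{\theta},
\end{equation*}
with $\theta=\tfrac{2}{p}+\tfrac{1}{q}>1$. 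Using superadditivity,
\begin{equation*}
  \sum_{[s,t]\in\pi}\omega(s,t)^{\theta}\le \Big(\max_{[s,t]\in\pi}\omega(s,t)\Big)^{\theta-1}\omega(0,T),
\end{equation*}
and the continuity of $\omega$ together with its vanishing on the diagonal force the maximum to tend to $0$ as $|\pi|\to 0$. Thus the two sums converge to the same limit, namely $\mathbb{L}(X)^{i,j}$. The main obstacle, if any, is bookkeeping: recognizing that the $X'_s(i,j)$-contributions cancel and that the hypothesis $\theta>1$ is exactly what makes the remainder sums vanish via the superadditivity trick from the sewing lemma.
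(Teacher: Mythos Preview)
Your argument is correct and essentially identical to the paper's proof: both compute the termwise difference between the arbitrary-point and left-point Riemann sums, use the Gubinelli expansion $X^i_{s,t}=X'_s(i,j)X^j_{s,t}+R^{i,j}_{s,t}$ to cancel the leading terms, and bound the remaining $R^{i,j}$-contributions by $2\,\omega(s,t)^{1/p+1/r}$ before applying superadditivity. The only cosmetic difference is the sign convention (you subtract left-point from arbitrary-point, the paper does the reverse).
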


\begin{proof}
  For a self-controlled path $X \in \mathcal{V}^p([0,T],\R^{d})$ with $1 \leq p < \infty$ we may assume without loss of generality that $X^i \in \CC^q_{X^j}$ for $1\leq i,j\leq d$,   $i\not= j$.
  From Theorem \ref{thm:levy1} we already know that the left-point Riemann sums converge. Hence, we only need to show that
  \begin{equation}\label{eq:ass1}
    \sum_{[s,t] \in \pi_n} ( X^i_{s} X^j_{s,t} -  X^j_{s} X^i_{s,t}) - \sum_{[s,t] \in \pi_n} ( X^i_{s^\pr} X^j_{s,t} - X^j_{s^\pr} X^i_{s,t})
  \end{equation}
  tends to zero along every sequence of partitions $(\pi_n)$ such that the mesh $\vert \pi_n \vert$ converges to zero. Indeed, we may write for a partition interval $[s,t]$
  \begin{align*}
    X^i_{s} X^j_{s,t} -  X^j_{s} X^i_{s,t}& - ( X^i_{s^\pr} X^j_{s,t} -  X^j_{s^\pr} X^i_{s,t})
     = -X^i_{s,s^\pr} X^j_{s,t} + X^j_{s,s^\pr} X^i_{s,t}\\
    &= -(X^\pr_s(i,j) X^j_{s,s^\pr} +R^{i,j}_{s,s^\pr}) X^j_{s,t} + X^j_{s,s^\pr} ( X^\pr_s(i,j) X^j_{s,t} +R^{i,j}_{s,t})\\
    &= -R^{i,j}_{s,s^\pr}X^j_{s,t} + X^j_{s,s^\pr} R^{i,j}_{s,t}.
  \end{align*}
  Taking the same control function $\omega$ for $X^j$ and $R^{i,j}$, we estimate
  \begin{align*}
    \vert  X^i_{s} X^j_{s,t}  -  X^j_{s} X^i_{s,t} - ( X^i_{s^\pr} X^j_{s,t} -  X^j_{s^\pr} X^i_{s,t})\vert
      = \vert - R^{i,j}_{s,s^\pr}X^j_{s,t} + X^j_{s,s^\pr} R^{i,j}_{s,t} \vert
      \leq 2 \omega(s,t)^\theta
  \end{align*}
  with $\theta :=\frac{2}{p}+\frac{1}{p}> 1$. Recalling the superadditivity of $\omega$, we get for $n\in\mathbb{N}$
  \begin{align*}
    \bigg \vert  \sum_{[s,t] \in \pi_n} ( X^i_{s,s^\pr} X^j_{s,t} -  X^j_{s,s^\pr} X^i_{s,t})\bigg \vert
     \leq \sum_{[s,t] \in \pi_n}  \omega(s,t)^\theta
     \leq  \max_{[s,t] \in \pi_n} \omega(s,t)^{\theta-1} \omega(0,T),
  \end{align*}
  which means that \eqref{eq:ass1} tends to zero as $n\to\infty$.
\end{proof}

\begin{example}
  Let $(B_t\,;\, t\in [0,T])$ be a standard Brownian motion on a probability space $(\Omega, \mathcal{F},\mathbb{P})$ and let $f\in C^1(\mathbb{R},\mathbb{R})$ be a continuously differentiable function with $\alpha$-H\"older continuous derivative for $\alpha >0$. The trajectories of $B$ are of finite $p$-variation for all $p>2$ outside a null set $\mathcal{N}$. Thus we can deduce from Theorem \ref{thm:levy1} that L\'evy's area of $(B + g_1,f(B)+ g_2)$ exists outside the same null set $\mathcal{N}$ whenever $g_1,g_2 \in \mathcal{V}^{q}([0,T],\R^d)$ for some $1 \leq q < 2$.
\end{example}

The following example illustrates that for $p\geq 2$ things are essentially different. It will in particular show that in this case self-control of a path is necessary for the existence of L\'evy's area.
\begin{example}\label{ex:levy}
  Let us consider for $m\in \mathbb{N}$ the functions $X^m\colon [-1,1] \to \mathbb{R}^2$ with components given by
  \begin{equation*}
    X^{1,m}_t := \sum^m_{k=1} a_k \sin (2^k \pi t) \quad \text{ and } \quad X^{2,m}_t :=  \sum^m_{k=1} a_k \cos (2^k \pi t),\quad t \in [-1,1],
  \end{equation*}
  where $a_k := 2^{-\alpha k}$ and $\alpha \in (0,1)$. Set $X:=\lim_{m\to \infty} X^m$. These functions are $\alpha$-H\"older continuous uniformly in $m$. Indeed, let $s,t \in [-1,1]$ and choose $k\in \mathbb{N}$ such that $2^{-k-1}\leq \vert s-t \vert \leq 2^{-k}$. Then we can estimate as follows 
  \begin{align*}
   \vert X^{1,m}_t - X^{1,m}_s  \vert
    & = \bigg \vert \sum_{l=1}^m a_l 2 \cos (2^{l-1} \pi (s+t)) \sin (2^{l-1} \pi (s-t) ) \bigg \vert \\
    & \leq 2 \sum_{l=1}^k \vert a_l\vert \vert \sin (2^{l-1} \pi (s-t) )\vert  + 2 \sum_{l=k+1}^\infty \vert a_l \vert\\
    & \leq 2 \sum_{l=1}^k \vert a_l \vert 2^{l-1} \pi\vert s-t\vert + 2 \sum_{l=k+1}^\infty \vert a_l \vert  \\
    & \leq  \sum_{l=1}^k 2^{l-\alpha l}\pi \vert s-t\vert + 2^{-\alpha (k+1)+1} \frac{1}{1-2^{-\alpha}}\\
    & \leq \frac{2^{(k+1)(1-\alpha)}-1}{2^{1-\alpha}-1} \pi \vert s-t \vert  + \frac{2^{1- \alpha}}{1-2^{-\alpha}} \vert s- t \vert^\alpha  \\
    & \leq \frac{2^{(k+1)(1-\alpha)}-1}{2^{1-\alpha}-1} \pi 2^{-k(1-\alpha)} \vert s-t \vert^\alpha  + \frac{2^{1- \alpha}}{1-2^{-\alpha}} \vert s- t \vert^\alpha
     \leq C \vert s - t\vert^\alpha
  \end{align*}
  for some constant $C >0$ independent of $m \in \mathbb{N}$. Analogously, we can get the $\alpha$-H\"older continuity of $X^{2,m}$. Furthermore, it can be seen with the same estimate that $(X^m)$ converges uniformly to $X$ and thus also in $\alpha$-H\"older topology. The limit function $X$ is not $\beta$-H\"older continuous for every $\beta >\alpha$. In order to see this, choose $s=0$ and $t=t_n = 2^{-n}$ for $n \in \mathbb{N}$ and observe that
  \begin{equation*}
    \frac{\vert X^1_{t_n} - X^1_0 \vert}{\vert t_n - 0 \vert^{\beta}} = \sum_{k=1}^{n-1} 2^{-\alpha k + \beta n} \sin (2^{k-n} \pi) \geq 2^{(\beta - \alpha)n+\alpha},
  \end{equation*}
  which obviously tends to infinity as $n$ tends to infinity. Since $\alpha$-H\"older continuity is obviously related to finite $\frac{1}{\alpha}$-variation, we can conclude that $X\in \mathcal{V}^{\frac{1}{\alpha}}([-1,1],\mathbb{R}^2),$ and $X\not\in \mathcal{V}^{\gamma}([-1,1],\mathbb{R}^2)$ for $\gamma<\frac{1}{\alpha}.$  Let us now show that $X$ possesses no L\'evy area. For this purpose, fix $\alpha\in (0,1)$ and $m \in \mathbb{N}$. Then L\'evy's area for $X^m$ is given by 
  \begin{align*}
    \int_{-1}^1 X^{1,m}_s & \dd X^{2,m}_s -  \int_{-1}^1 X^{2,m}_s \dd X^{1,m}_s \\
    =& - \sum_{k,l=1}^m a_k a_l \int_{-1}^1 \big( \sin ( 2^k \pi s )  \sin( 2^l \pi s ) 2^l \pi + \cos (2^l \pi s) \cos (2^k \pi s) 2^k \pi \big ) \dd s \\
    =& - \sum_{k,l=1}^m  a_k a_l \bigg ( 2^l \pi \int_{-1}^1 \frac{1}{2} \big( \cos ((2^k-2^l) \pi s)-\cos ((2^k+2^l) \pi s)\big )\dd s\\
                                             &\hspace{2.5cm} +  2^k \pi \int_{-1}^1\frac{1}{2} \big(\cos ((2^k-2^l) \pi s ) + \cos ((2^k+2^l) \pi s) \big)\dd s\bigg )\\
    =& - 2 \sum_{k=1}^m a^2_k 2^k \pi
    = - 2 \sum_{k=1}^m 2^{(1-2\alpha)k} \pi.
  \end{align*}
  This quantity diverges as $m$ tends to infinity for $\frac{1}{\alpha} \ge 2$. Since $(X^m)$ converges to $X$ in the $\alpha$-H\"older topology, we can use this result to choose partition sequences of $[-1,1]$ along which Riemann sums approximating the L\'evy area of $X$ diverge as well. This shows that $X$ possesses no L\'evy area. In return Theorem \ref{thm:levy1} implies that $X$ cannot be self-controlled. However, it is not to hard to see directly that no regularity is gained by controlling $X^1$ with $X^2$. For this purpose, note that for $-1\le s\le t\le 1$, and $0\not=X_s'\in\mathbb{R}$, one has
  \begin{align*}
    \vert  X_{s,t}^1- X_s^\pr  X^2_{s,t} \vert & =  \bigg \vert \sum_{k=1}^\infty a_k \big [(\sin(2^k \pi t) - \sin(2^k \pi s))-X_s^\pr(\cos (2^k \pi t) - \cos(2^k \pi s)) \big ] \bigg\vert \\
    =\bigg \vert 2 \sum_{k=1}^\infty a_k &\big[  \sin(2^{k-1} \pi (s-t)) \cos(2^{k-1} \pi (s+t))\\
     &\hspace{1.9cm}+X_s^\pr \sin (2^{k-1} \pi (s+t))\sin(2^{k-1} \pi (s-t))\big] \bigg\vert \\
    =\bigg \vert 2 \sum_{k=1}^\infty a_k & \sin(2^{k-1} \pi (s-t)) \sqrt{1+(X_s^\pr)^2} \sin (2^{k-1} \pi (s+t) + \arctan ((X_s^\pr)^{-1}) )\bigg\vert.
  \end{align*}
  Let us now investigate H\"older regularity at $s=0$. First, assume $X_0'>0$, and take $t=2^{-n}$ to obtain
  \begin{align*}
    &\frac{\vert X_{0,2^n}^1- X_0' X^2_{0,2^n} \vert }{2^{-\beta n}} \\
    &\qquad =2^{\beta n}\bigg \vert 2 \sum_{k=1}^{n} a_k \sin(2^{k-1-n} \pi) \sqrt{1+(X_0')^2} \sin (2^{k-1-n} \pi + \arctan ((X_0')^{-1}) )\bigg\vert \\
    &\qquad \geq 2^{(\beta - \alpha )n} \sin\big( \frac{\pi}{2} + \arctan ((X_0')^{-1})\big).
   \end{align*}
   For $X_0' < 0$ the same estimates work for $t_n=-2^{-n} $ instead. Therefore, the H\"older regularity at $0$ cannot be better than $\alpha$ and in particular $X$ cannot be self-controlled for $\frac{1}{\alpha} > 2$.
\end{example}

\section{F\"ollmer integration}\label{sec:foll}

In his seminal paper F\"ollmer \cite{Follmer1979} considered one dimensional pathwise integrals. He was able to give a pathwise meaning to the limit
\begin{equation*}
  \int_0^T \DD F(X_t)\dd^{\pi_n} X_t := \lim_{n \to \infty} \sum_{[s,t]\in \pi_n} \langle \DD F(X_s), X_{s,t}\rangle,
\end{equation*}
provided $F \in C^2(\R^d,\R)$. A translation of F\"ollmer's work, today named \textit{F\"ollmer integration}, can be found in the appendix of \cite{Sondermann2006}. His starting point was the hypothesis that quadratic variation of $X \in C([0,T],\R^d)$ exists along a sequence of partitions $(\pi_n)_{n\in\mathbb{N}}$ whose mesh tends to zero. Here $\langle \cdot,\cdot \rangle$ denotes the usual inner product on $\R^d$. As indicated and discussed below, this construction of an integral depends strongly on the chosen sequence of partitions $(\pi_n)_{n\in\mathbb{N}}$.

Before coming back to an approach to F\"ollmer's integral, we shall construct a Stratonovich type integral, thereby discussing the problem of dependence on a chosen sequence of partitions. As in the previous section, our approach is based on the notion of controlled paths. This will also lead us on a route which does not require the existence of iterated integrals as in the classical rough path approach. We fix a $\gamma\in[0,1]$, to discuss Stratonovich limits for Riemann sums where integrands are taken as convex combinations $\gamma Y_s + (1-\gamma) Y_t$ of the values of $Y$ at the extremes of a partition interval $[s,t]$. We start by decomposing these sums into symmetric and antisymmetric parts. For $p,q \in [1,\infty)$, $X \in \mathcal{V}^p([0,T],\mathbb{R}^d)$ and $Y\in \CC^q_X$ we have
\begin{align}\label{eq:sa}
  \gint_0^T  &  Y_t \dd X_t := \lim_{\vert \pi \vert \to 0}  \sum_{[s,t] \in \pi} \langle Y_s + \gamma Y_{s,t}, X_{s,t}\rangle \nonumber \\
  = & \frac{1}{2} \bigg ( \gint_0^T  Y_t \dd X_t  + \gint_0^T X_t \dd Y_t \bigg ) + \frac{1}{2}\bigg (\gint_0^T  Y_t \dd X_t- \gint_0^T  X_t\dd Y_t \bigg ) \nonumber\\
  =: & \frac{1}{2} \mathbb{S}_\gamma\langle X,Y \rangle + \frac{1}{2} \mathbb{A}_\gamma\langle X,Y \rangle.
\end{align}
Note that $\gamma=0$ corresponds to the classical It\^o integral and $\gamma = \frac{1}{2}$ to the classical Stratonovich integral. 

If the variation orders of $X$ and $Y$ fulfill  $1/p + 1/q >1$, we are in the framework of Young's integration theory. Below $1$, either the existence of the rough path or control is needed. To illustrate this, we go back to Example \ref{ex:levy}.

\begin{example}
  Let $X = (X^1, X^2)$ be given according to Example \ref{ex:levy}. In this case, we have seen that $X^1$ and $X^2$ are of finite $\frac{1}{\alpha}$-variation. With decomposition \eqref{eq:sa} we see that
  \begin{align*}
    \frac{1}{2}\text{-}\int_0^1  &  X^2_t \dd X^1_t := \lim_{\vert \pi \vert \to 0}  \sum_{[s,t] \in \pi} \langle X^2_s + \frac{1}{2} X^2_{s,t}, X^1_{s,t}\rangle
    =  \frac{1}{2} \mathbb{S}_\frac{1}{2}\langle X^1,X^2 \rangle + \frac{1}{2} \mathbb{A}_\frac{1}{2}\langle X^1,X^2 \rangle\\
    = &\frac{1}{2} \lim_{\vert \pi \vert \to 0}\sum_{[s,t]\in\pi} \frac{1}{2} \big (\langle X^2_s+X^2_t, X^1_t-X^1_s\rangle + \langle X^1_s+X^1_t, X^2_t-X^2_s\rangle\big) + \frac{1}{2} \mathbb{L}^{1,2}(X)\\
    = & \frac{1}{2} \lim_{\vert \pi \vert \to 0} \sum_{[s,t]\in\pi} \langle X^1,X^2\rangle_{s,t} + \frac{1}{2} \mathbb{L}^{1,2}(X)
    = \frac{1}{2}( X^1_1 X^2_1 - X^1_0 X^2_0 ) + \frac{1}{2} \mathbb{L}^{1,2}(X),
  \end{align*}
  provided all terms are well-defined. Therefore, the integral exists if and only if L\'evy's area exists, which is not the case for instance if $\alpha=\frac{1}{2}.$ So beyond Young's theory, the existence of the $\frac{1}{2}$-Stratonovich integral is closely linked to the existence of L\'evy's area.
\end{example}

Using a suitable control concept, we will next construct the Stratonovich integral described above, but not just with restriction to a particular sequence of partitions. This time, the symmetry of the Gubinelli derivative of a controlled path plays an essential role. However, this symmetry assumption can be avoided if the involved paths control each other.

\begin{definition}
  Let $X,Y \in \mathcal{V}^{p}([0,T],\R^d)$. We say that $X$ and $Y$ are \textit{similar} if there exist $X^\prime,Y^\prime\in \mathcal{V}^{q} ([0,T],\R^{d\times d})$ such that $X\in \CC_Y^{q} $ with Gubinelli derivative $X'$, $Y\in \CC_X^{q}$ with Gubinelli derivative $Y'$, and $((X_t^\pr)^*)^{-1} = Y_t^\pr$ for all $t \in [0,T]$. In this case we write $Y \in \SP^{q}_X$.
\end{definition}

Let us give a very simple example of two paths $X,Y \in \mathcal{V}^{p}([0,T],\R^d)$ such that $Y\in \SP^{q}_X$ but neither $Y\in \CC_X^{q}$ with $Y^\pr$ symmetric nor $X\in \CC_Y^{q}$ with $X^\pr$ symmetric.

\begin{example}
  For $p \in [2,3)$ take $X^1 \in \mathcal{V}^{p}([0,T],\R)$ and $X^2,X^3 \in \mathcal{V}^{\frac{p}{2}}([0,T],\R)$. If we set $X:=(X^1,X^2,X^3)$ and $Y:=(X^1,0,0)$, we obviously have $X,Y \in \mathcal{V}^{p}([0,T],\R^3)$. In this case we could choose $X^\pr$ and $Y^\pr$ identical to $(z_1,z_2,z_3)$, where $z^*_1 := (1,0,0)$, $z^*_2 := (0,0,1)$, and $z^*_3 := (0,-1,0)$. We see that $Y \in \SP_X^{p}$, but $X^\pr$ and $Y^\pr$ are not symmetric matrices.
\end{example}

Under both assumptions we prove the existence of the Stratonovich integral described above. This time, thanks to the additional requirements of the Gubinelli derivative, the usual concept of controlled paths is sufficient, and L\'evy's area is not needed.

\begin{theorem}\label{thm:levy2}
  Let $\gamma\in[0,1]$, $X \in \mathcal{V}^p ([0,T],\R^d)$. If $ Y \in \CC_X^{q}$ and $Y^\prime_t$ is a symmetric matrix for all $t \in [0,T]$, then the antisymmetric part
  \begin{equation}\label{eq:levy}
    \mathbb{A}_\gamma\langle X,Y \rangle := \lim_{\vert \pi \vert \to 0} \sum_{[s,t] \in \pi} \big(\langle Y_s+\gamma Y_{s,t}, X_{s,t}\rangle - \langle X_s+\gamma X_{s,t}, Y_{s,t}\rangle\big),
  \end{equation}
  exists and satisfies
  \begin{equation*}
    \mathbb{A}_\gamma\langle X,Y \rangle = \mathbb{A}\langle X,Y \rangle := \lim_{\vert \pi \vert \to 0} \sum_{[s,t] \in \pi} \big(\langle Y_{s^\pr}, X_{s,t}\rangle - \langle X_{s^\pr}, Y_{s,t}\rangle\big)
  \end{equation*}
  for every choice of points $s^\pr\in[s,t] \in \pi$.
  The same result holds if $Y \in \SP^q_X$.
\end{theorem}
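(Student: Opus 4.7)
The plan is to apply the sewing lemma (Lemma \ref{lem:young}) to the two-parameter quantity $\Xi_{s,t} := \langle Y_s, X_{s,t}\rangle - \langle X_s, Y_{s,t}\rangle$. A preliminary observation reduces the problem: since $\langle Y_{s,t}, X_{s,t}\rangle = \langle X_{s,t}, Y_{s,t}\rangle$, the $\gamma$-contribution cancels in the antisymmetric combination, so that
\begin{equation*}
  \langle Y_s + \gamma Y_{s,t}, X_{s,t}\rangle - \langle X_s + \gamma X_{s,t}, Y_{s,t}\rangle = \Xi_{s,t}
\end{equation*}
for every $\gamma \in [0,1]$. Hence it suffices to prove that the Riemann sums $\sum_{[s,t]\in\pi} \Xi_{s,t}$ converge, and the resulting limit will automatically be independent of $\gamma$.

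A direct expansion yields $\Xi_{s,t} - \Xi_{s,u} - \Xi_{u,t} = \langle X_{s,u}, Y_{u,t}\rangle - \langle Y_{s,u}, X_{u,t}\rangle$. In the symmetric case, I would substitute $Y_{s,u} = Y'_s X_{s,u} + R^Y_{s,u}$ and $Y_{u,t} = Y'_u X_{u,t} + R^Y_{u,t}$ to get
\begin{equation*}
  \langle X_{s,u}, (Y'_u - (Y'_s)^*) X_{u,t}\rangle + \langle X_{s,u}, R^Y_{u,t}\rangle - \langle R^Y_{s,u}, X_{u,t}\rangle.
\end{equation*}
Symmetry $(Y'_s)^* = Y'_s$ converts the middle factor into $Y'_u - Y'_s$, whose $q$-variation is finite. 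Choosing a single control function $\omega$ that dominates the $p$-variation of $X$, the $q$-variation of $Y'$ and the $r$-variation of $R^Y$ (the sum of control functions is again a control function), each summand is bounded by a constant multiple of $\omega(s,t)^{2/p + 1/q}$, and Lemma \ref{lem:young} applies with $\theta := 2/p + 1/q > 1$.

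For the similar case, symmetry of $Y'$ is unavailable, but the identity $(Y'_s)^* X'_s = I$, which follows from $((X'_s)^*)^{-1} = Y'_s$, provides the required cancellation. Substituting $Y_{s,u} = Y'_s X_{s,u} + R^Y_{s,u}$ in $-\langle Y_{s,u}, X_{u,t}\rangle$ and then expanding the resulting $(Y'_s)^* X_{u,t}$ via $X_{u,t} = X'_u Y_{u,t} + R^X_{u,t}$ together with $(Y'_s)^* X'_u = I + (Y'_s)^* (X'_u - X'_s)$, the leading order $\langle X_{s,u}, Y_{u,t}\rangle$ cancels, leaving three remainder terms involving $X'_u - X'_s$, $R^X_{u,t}$ and $R^Y_{s,u}$, each again of order $\omega(s,t)^{2/p + 1/q}$ for a suitable common $\omega$. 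The sewing lemma applies exactly as before.

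Finally, to prove that the base point $s^\pr \in [s,t]$ is irrelevant, I would write the difference between the two Riemann sums termwise as $\langle Y_{s,s^\pr}, X_{s,t}\rangle - \langle X_{s,s^\pr}, Y_{s,t}\rangle$ and perform the same substitutions with $u$ replaced by $s^\pr$. In the symmetric case this collapses to $\langle R^Y_{s,s^\pr}, X_{s,t}\rangle - \langle X_{s,s^\pr}, R^Y_{s,t}\rangle$; in the similar case one picks up $\langle X_{s,s^\pr}, (Y'_s)^* R^X_{s,t}\rangle + \langle R^Y_{s,s^\pr}, X_{s,t}\rangle$. Either way, each summand is bounded by $C\,\omega(s,t)^\theta$, and superadditivity gives $\sum_{[s,t]\in\pi} \omega(s,t)^\theta \leq \max_{[s,t]\in\pi} \omega(s,t)^{\theta - 1}\, \omega(0,T) \to 0$ as $|\pi|\to 0$, precisely as in the preceding lemma of Section \ref{sec:levy}. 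The main organizational obstacle is locating where to insert $(Y'_s)^* X'_s = I$ in the similar case so that the leading term cancels cleanly; once that is arranged, the remaining estimates are routine $p$-variation bookkeeping.
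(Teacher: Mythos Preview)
Your proposal is correct and follows essentially the same approach as the paper: the $\gamma$-cancellation reduces to the left-point germ $\Xi_{s,t}$, the sewing lemma is applied with $\theta = 2/p + 1/q$, and base-point independence is handled by the same superadditivity estimate. In the similar case your substitution order differs cosmetically from the paper's (you insert $Y_{s,u} = Y'_s X_{s,u} + R^Y_{s,u}$ and then expand $X_{u,t}$, whereas the paper expands both factors in $\langle X_{s,u}, Y_{u,t}\rangle$ and uses $(X'_s)^* Y'_s = I$ directly), but the cancellation mechanism and the resulting remainder bounds are identical.
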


\begin{proof}
  It is easy to verify that by definition the antisymmetric part, if it exists as a limit of the Riemann sums considered, has to satisfy the second formula of the claim at least with the choice $s' = s$, for all intervals $[s,t]$ belonging to a partition. To prove that this limit exists, we use Lemma \ref{lem:young}. For this purpose, we set $\Xi_{s,t}:= \langle Y_s, X_{s,t}\rangle - \langle X_s, Y_{s,t}\rangle$ for $(s,t) \in \Delta_T$. Since $Y$ is controlled by $X$, we obtain
  \begin{align*}
    \Xi_{s,t} - \Xi_{s,u}- \Xi_{u,t}
      &= \langle Y_u^\prime X_{u,t}+ R^Y_{u,t}, X_{s,u}\rangle -\langle X_{u,t},Y_s^\prime X_{s,u}+ R^Y_{s,u}\rangle\\
      &= \langle R^Y_{u,t}, X_{s,u}\rangle - \langle X_{u,t},R^Y_{s,u}\rangle\ + \langle Y_u^\prime X_{u,t}, X_{s,u}\rangle - \langle X_{u,t},Y_s^\prime X_{s,u}\rangle\\
      &= \langle R^Y_{u,t}, X_{s,u}\rangle - \langle X_{u,t},R^Y_{s,u}\rangle\ + \langle  X_{u,t},Y_u^\prime X_{s,u} - Y_s^\prime X_{s,u}\rangle
  \end{align*}
  for $0\leq s < u< t \leq T$, where we used $\langle Y_u^\prime X_{u,t}, X_{s,u}\rangle = \langle  X_{u,t},Y_u^\prime X_{s,u}\rangle$ in the last line thanks to symmetry. With the same control $\omega$ for all functions involved as above, this gives
  \begin{align*}
    \vert \Xi_{s,t} - \Xi_{s,u}- \Xi_{u,t}\vert \leq \omega(s,t)^{\frac{1}{p}+\frac{1}{r}} + \omega(s,t)^{\frac{1}{p}+\frac{1}{r}} + \omega(s,t)^{\frac{2}{p}+\frac{1}{q}}
    \leq 3 \omega (s,t)^\theta
  \end{align*}
  with $\theta := \frac{2}{p}+\frac{1}{q} >1$. So from  Lemma \ref{lem:young} we conclude that the left-point Riemann sums converge. It remains to show that
  \begin{equation}\label{eq:ass}
   \sum_{[s,t] \in \pi_n} \big ( \langle Y_{s^\pr}, X_{s,t}\rangle -  \langle X_{s^\pr} , Y_{s,t}\rangle\big)-  \sum_{[s,t] \in \pi_n} \big( \langle Y_{s}, X_{s,t} \rangle- \langle X_{s}, Y_{s,t}\rangle \big) 
  \end{equation}
  tends to zero along every sequence of partitions $(\pi_n)$ such that the mesh $\vert \pi_n \vert$ converges to zero. Applying the symmetry of $Y^\pr$, we get
  \begin{align*}
    \langle Y_{s'}, X_{s,t} \rangle - \langle X_{s'}, Y_{s,t} \rangle &- \big ( \langle Y_{s} , X_{s,t}\rangle - \langle X_{s}, Y_{s,t}\rangle\big)\\
    &= \langle Y^\pr_s X_{s,s^\pr} + R^Y_{s,s^\pr},X_{s,t} \rangle - \langle X_{s,s^\pr},  Y^\pr_s X_{s,t} + R^Y_{s,t} \rangle\\
    &= \langle R^Y_{s,s^\pr},X_{s,t} \rangle - \langle X_{s,s^\pr}, R^Y_{s,t} \rangle,
  \end{align*}
  and thus
  \begin{equation*}
    \big \vert \langle Y_{s}, X_{s,t}\rangle  -  \langle X_{s}, Y_{s,t}\rangle - \big( \langle Y_{s^\pr}, X_{s,t}\rangle - \langle X_{s^\pr}, Y_{s,t}\rangle \big)\big\vert \leq \omega(s,t)^\theta
  \end{equation*}
  with $\theta :=\frac{1}{p}+\frac{1}{r}> 1$, where we choose the same control function $\omega$ for $X$ and $R^Y$. Therefore, the properties of $\omega$ imply
  \begin{align*}
    \bigg \vert  \sum_{[s,t] \in \pi_n} \big ( \langle Y_{s,s^\pr}, X_{s,t}\rangle -  \langle X_{s,s^\pr}, Y_{s,t}\rangle \big )\bigg \vert
     \leq  \sum_{[s,t] \in \pi_n}  \omega(s,t)^\theta
     \leq  \max_{[s,t] \in \pi_n} \omega(s,t)^{\theta-1} \omega(0,T),
  \end{align*}
  which means that \eqref{eq:ass} tends to zero as $\vert \pi_n \vert$ tends to zero.\\
  If we instead assume, that $X$ and $Y$ are similar, we obtain
  \begin{align*}
    &\Xi_{s,t} - \Xi_{s,u}- \Xi_{u,t}
      = \langle X^\pr_s Y_{s,u} + R^X_{s,u}, Y^\pr_u X_{u,t} + R^Y_{u,t} \rangle - \langle Y_{s,u}, X_{u,t}\rangle \\
      &= \langle X^\pr_s Y_{s,u} , R^Y_{u,t} \rangle +\langle R^X_{s,u}, Y^\pr_u X_{u,t} \rangle+\langle R^X_{s,u},R^Y_{u,t} \rangle+ \langle X^\pr_s Y_{s,u}, Y^\pr_u X_{u,t}\rangle -\langle Y_{s,u}, X_{u,t}\rangle
  \end{align*}
  for $0\leq s \leq u \leq t \leq T$. The last two terms in the preceding formula can be rewritten as
  \begin{align*}
    \langle X^\pr_s Y_{s,u}, Y^\pr_u X_{u,t}\rangle-\langle Y_{s,u}, X_{u,t}\rangle
       &=\langle Y_{s,u},(X_t^\pr)^* Y^\pr_u X_{u,t}- X_{u,t}\rangle\\
       &= \langle Y_{s,u},(X_t^\pr)^* ( Y^\pr_u -  Y_t^\pr ) X_{u,t}\rangle.
  \end{align*}
  Here we applied $((X_t^\pr)^*)^{-1} = Y_t^\pr$. Since the finite sum of control functions is again a control function, we can choose the same control function $\omega$ for $X,X^\prime,R^X$ and $Y,Y^\prime, R^Y$, and obtain
  \begin{align*}
    \vert \Xi_{s,t} &- \Xi_{s,u}  - \Xi_{u,t}\vert\\
    &\leq  \vert\vert  X^\pr \vert\vert_\infty  \omega (s,t)^{\frac{1}{p}+\frac{1}{r}} + \vert\vert Y^\pr\vert\vert_\infty \omega (s,t)^{\frac{1}{r}+ \frac{1}{p}} + \omega (s,t)^{\frac{1}{r}+\frac{1}{r}} + \vert\vert X^\pr \vert\vert_\infty  \omega (s,t)^{\frac{1}{q}+\frac{2}{p}}\\
    &\leq \big( 2 \vert\vert  X^\pr \vert\vert_\infty +\vert\vert Y^\pr\vert\vert_\infty + \omega(0,T)^{\frac{1}{q}}\big ) \omega (s,t)^\theta,
  \end{align*}
  where $\vert\vert \cdot \vert\vert_\infty $ denotes the supremum norm and $\theta := 2/p + 1/q  > 1$.
  We therefore have shown that the left-point Riemann sums converge. It remains to prove that \eqref{eq:ass} goes to zero along every sequence of partitions $(\pi_n)$ such that the mesh $\vert \pi_n \vert$ tends to zero. Since $X$ and $Y$ are similar, we observe that for $(s,t)\in\Delta_T$, and $s'\in[s,t]$
  \begin{align*}
   \langle Y_{s^\pr} , X_{s,t}\rangle - \langle X_{s^\pr}, Y_{s,t}\rangle-& \big ( \langle  Y_{s}, X_{s,t} \rangle -  \langle X_{s}, Y_{s,t} \rangle\big) \\
    &= \langle Y^\pr_s X_{s,s^\pr}, X_{s}^\pr Y_{s,t}\rangle +\langle Y^\pr_s X_{s,s^\pr}, R^X_{s,t}\rangle +\langle R^Y_{s,s^\pr}, X^\pr_s Y_{s,t}\rangle\\
    &\qquad+ \langle R^Y_{s,s^\pr}, R^X_{s,t} \rangle- \langle Y_{s,t}, X_{s,s^\pr}\rangle\\
    &= \langle Y^\pr_s X_{s,s^\pr}, R^X_{s,t}\rangle +\langle R^Y_{s,s^\pr}, X^\pr_s Y_{s,t}\rangle + \langle R^Y_{s,s^\pr} , R^X_{s,t} \rangle.
  \end{align*}
  To obtain the last line, we once again use $((X_s^\pr)^*)^{-1} = Y_s^\pr$. Taking again the same control function $\omega$ for $X,X^\pr, R^X$ and $Y,Y^\pr,R^Y$, we estimate
  \begin{align*}
    \big \vert \langle Y_{s}, X_{s,t}\rangle & - \langle X_{s}, Y_{s,t}\rangle - \big( \langle Y_{s^\pr}, X_{s,t}\rangle - \langle X_{s^\pr}, Y_{s,t}\rangle \big)\big\vert
      \leq C \omega(s,t)^\theta,
  \end{align*}
  where $C:= \vert\vert X^\pr\vert\vert_\infty + \vert \vert Y^\pr \vert\vert_\infty+\omega (0,T)^{1/q}$ with $\theta :=\frac{2}{p}+\frac{1}{p}> 1$. Superadditivity of $\omega$ finally gives
  \begin{align*}
    \bigg \vert  \sum_{[s,t] \in \pi_n} \big ( \langle Y_{s,s^\pr}, X_{s,t}\rangle -  \langle X_{s,s^\pr}, Y_{s,t}\rangle & \big )\bigg \vert
    \leq C  \omega(0,T) \max_{[s,t] \in \pi_n} \omega(s,t)^{\theta-1} ,
  \end{align*}
  which means that \eqref{eq:ass} tends to zero as $\vert \pi_n \vert$ tends to zero.
\end{proof}

\begin{remark}
  The proof of Theorem \ref{thm:levy2} works analogously under the assumption that $X$ is controlled by $Y$ and $X^\prime_t$ is a symmetric matrix for all $t \in [0,T]$. Moreover, if  $Y$ is controlled by $X$ and $Y^\prime_t$ is an antisymmetric matrix for all $t \in [0,T]$, then an analogous result to Theorem \ref{thm:levy2} holds true for the symmetric part $\mathbb{S}_\gamma\langle X,Y \rangle$.
\end{remark}

In case $\gamma = \frac{1}{2}$ as in the example above, the symmetric part simplifies considerably, and therefore the preceding theorem will already imply the existence of the  $\frac{1}{2}$-Stratonovich integral.

\begin{corollary}\label{cor:Stra}
  Let $X \in \mathcal{V}^p ([0,T],\R^d)$, $ Y \in\CC_X^{q}$ and suppose $Y^\prime_t$ is a symmetric matrix for all $t \in [0,T]$ or $Y \in \SP^q_X $.
  Then, the Stratonovich integral
  \begin{equation}\label{eq:Stra}
    \int_0^T Y_t\sd X_t :=  \lim_{\vert \pi \vert \to 0} \sum_{[s,t] \in \pi} \langle Y_s+ \frac{1}{2} Y_{s,t}, X_{s,t}\rangle
  \end{equation}
  exists and satisfies
  \begin{equation*}
    \frac{1}{2}\text{-}\int_0^T Y_t \dd X_t = \int_0^T Y_t \circ \dd X_t  = \frac{1}{2}\big(\langle Y_T, X_{T}\rangle - \langle Y_0, X_0 \rangle\big) +\frac{1}{2} \mathbb{A} \langle X, Y\rangle.
  \end{equation*}
\end{corollary}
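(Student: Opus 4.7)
The plan is to exploit the symmetric/antisymmetric decomposition from \eqref{eq:sa} with $\gamma=\frac{1}{2}$. The antisymmetric part is already handled by Theorem \ref{thm:levy2}, since the hypotheses (either symmetry of $Y'$, or $Y\in\SP^q_X$) are exactly those needed to conclude that
\[
  \mathbb{A}_{1/2}\langle X,Y\rangle = \mathbb{A}\langle X,Y\rangle
\]
exists as a genuine limit independent of the chosen sequence of partitions. So the only remaining task is to identify the symmetric part $\mathbb{S}_{1/2}\langle X,Y\rangle$ without any further regularity assumption.

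The main observation is that for $\gamma=\frac{1}{2}$ the symmetric increments telescope exactly. Concretely, for a single partition interval $[s,t]$ one expands
\[
  \langle Y_s+\tfrac{1}{2}Y_{s,t}, X_{s,t}\rangle + \langle X_s+\tfrac{1}{2}X_{s,t}, Y_{s,t}\rangle
  = \langle Y_s,X_{s,t}\rangle + \langle X_s,Y_{s,t}\rangle + \langle X_{s,t},Y_{s,t}\rangle,
\]
and a direct check using the symmetry of the inner product and the identity $X_{s,t}=X_t-X_s$ collapses the right-hand side to $\langle X_t,Y_t\rangle-\langle X_s,Y_s\rangle$. This is a purely algebraic identity, requiring no hypothesis on $X$ or $Y$ beyond continuity; summing over $[s,t]\in\pi$ telescopes to $\langle X_T,Y_T\rangle-\langle X_0,Y_0\rangle$, and in particular the limit over any partition sequence exists and equals this boundary term.

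Putting the two pieces together via \eqref{eq:sa},
\[
  \frac{1}{2}\text{-}\int_0^T Y_t\dd X_t
  = \frac{1}{2}\mathbb{S}_{1/2}\langle X,Y\rangle + \frac{1}{2}\mathbb{A}_{1/2}\langle X,Y\rangle
  = \frac{1}{2}\big(\langle Y_T,X_T\rangle-\langle Y_0,X_0\rangle\big) + \frac{1}{2}\mathbb{A}\langle X,Y\rangle,
\]
which is precisely the claimed formula and, incidentally, also shows the Stratonovich limit \eqref{eq:Stra} exists along arbitrary partition sequences.

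There is essentially no obstacle: the only analytic content (convergence of Riemann sums of the antisymmetric type) has already been borne by Theorem \ref{thm:levy2}, and everything else is the well-known product-rule telescoping that characterises the midpoint/trapezoidal approximation. The one point worth taking a little care with is that the telescoping identity above uses only the symmetry of the Euclidean inner product, so it applies equally in the two parallel hypotheses (symmetric $Y'$ or $Y\in\SP^q_X$); no separate argument is needed for the two cases.
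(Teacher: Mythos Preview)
Your proof is correct and follows essentially the same route as the paper's own argument: split via \eqref{eq:sa}, invoke Theorem~\ref{thm:levy2} for the antisymmetric part, and verify that for $\gamma=\tfrac{1}{2}$ the symmetric summand collapses algebraically to $\langle Y,X\rangle_{s,t}$, so the sum telescopes to $\langle Y_T,X_T\rangle-\langle Y_0,X_0\rangle$. The paper presents this last identity by reference to an earlier example rather than writing out the expansion, but the content is identical.
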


\begin{proof}
   By equation \eqref{eq:sa} we may separately treat the symmetric part $\mathbb{S}_\frac{1}{2}\langle X,Y\rangle$ and the antisymmetric part $\mathbb{A}_\frac{1}{2}\langle X,Y\rangle$ of the integral $ \frac{1}{2}\text{-}\int_0^T  Y_t \dd X_t $. The existence of the antisymmetric part $\mathbb{A}_\frac{1}{2}\langle X,Y\rangle$ follows from Theorem \ref{thm:levy2}. For the symmetric part, note that as in Example \ref{ex:levy}
  \begin{align*}
    \langle Y_s  +  \frac{1}{2} Y_{s,t}, X_{s,t}\rangle + \langle X_s+ \frac{1}{2} X_{s,t}, Y_{s,t} \rangle
     =  \langle Y, X \rangle_{s,t}, \quad (s,t)\in\Delta_T.
  \end{align*}
  Therefore, $\mathbb{S}_\frac{1}{2}\langle X,Y\rangle $ is given by
  \begin{align}\label{eq:sym1}
    \mathbb{S}_\frac{1}{2}\langle X,Y\rangle &= \lim_{\vert \pi\vert \to 0}\sum_{[s,t] \in \pi} \big(\langle Y_s +  \frac{1}{2} Y_{s,t}, X_{s,t}\rangle + \langle X_s+ \frac{1}{2} X_{s,t}, Y_{s,t}\rangle \big)\nonumber\\
    & = \langle Y_T, X_T\rangle- \langle X_0, Y_0\rangle.
  \end{align}
  The proof works analogously for $Y \in \SP^q_X$.
\end{proof}

The discussion of $\gamma$-Stratonovich integrals above has shown that the corresponding antisymmetric component can be treated by means of the concept of path control. In the case $\gamma \not= \frac{1}{2}$, a symmetric term is left to consider. This does not seem to be possible by means of the ideas used for the antisymmetric component. And this brings us back to F\"ollmer's approach. Our treatment of the symmetric part reflects the role played by \emph{quadratic variation} in F\"ollmer's approach, and will therefore be strongly dependent on partition sequences. For this purpose we define the quadratic variation in the sense of F\"ollmer (cf. \cite{Follmer1979}), and call a sequence of partitions $(\pi_n)$ \textit{increasing} if for all $[s,t] \in \pi_n$ there exist $ [t_i, t_{i+1}] \in \pi_{n+1}$, $i=1,\dots,N$, such that $[s,t] = \bigcup_{i=1}^N [t_i,t_{i+1}]$.

\begin{definition}
  Let $(\pi_n)$ be an increasing sequence of partitions such that \\ $\lim_{n \to \infty} \vert \pi_n\vert=0$.
  A continuous function $f \colon [0,T] \to \mathbb{R}$ has \textit{quadratic variation} along $(\pi_n)$ if the sequence of discrete measures on $([0,T], \mathcal{B}([0,T]))$ given by
  \begin{equation}\label{eq:follmer}
    \mu_n := \sum_{[s,t] \in \pi_n} \vert f_{s,t} \vert^2 \delta_{s}
  \end{equation}
  converges weakly to a measure $\mu$, where $\delta_s$ denotes the Dirac measure at $s \in [0,T]$. We write $[ f ]_t$ for the ``distribution function'' of the interval measure associated with $\mu$. A continuous path $X = (X^1,\dots,X^d)$ has \textit{quadratic variation} along $(\pi_n)$ if \eqref{eq:follmer} holds for all $X^i$ and $X^i+X^j$, $1\leq i,j \leq d$. In this case, we set
  \begin{equation*}
    [X^i,X^j]_t := \frac{1}{2}\big([X^i+X^j]_t - [X^i]_t - [X^j]_t\big), \quad t \in [0,T].
  \end{equation*}
\end{definition}

\begin{remark}
  Since in our situation the limiting distribution function is continuous, weak convergence is equivalent to uniform convergence to the distribution function. Hence, $X = (X^1,\dots,X^d)  \in C([0,T], \R^d)$ has quadratic variation in the sense of F\"ollmer if and only if
  \begin{equation*}
    [ X^i,X^j ]_t^n := \sum_{[u,v]\in \pi_n} X_{u \wedge t, v \wedge t}^i X_{u \wedge t, v \wedge t}^j
  \end{equation*}
  converges uniformly to $[X^i,X^j]$ in $C([0,T],\R)$ for all $1\leq i,j \leq d$, where $u\wedge t := \min \{u,t\}$. See Lemma 4.20 in \cite{Perkowski2013}.
\end{remark}

\begin{remark}
  Let us emphasize here that \emph{quadratic variation} should not be confused with the notion of \emph{$2$-variation}: quadratic variation depends on the choice of a partition sequence $(\pi_n)$, $2$-variation does not. In fact, for every continuous function $f \in C([0,T],\mathbb{R})$ there exits a sequence of partitions $(\pi_n)$ with $\lim_{n \to \infty} \vert \pi_n\vert=0$ such that $[ f, f ]_t=0$ for all $t\in [0,T]$. See for instance Proposition 70 in \cite{Freedman1983}.
\end{remark}

The existence of quadratic variation guaranteed, F\"ollmer was able to prove a pathwise version of It\^o's formula. In his case, the construction of the integral is closely linked to the partition sequence chosen for the quadratic variation. We will now aim at combining the techniques of controlled paths with the quadratic variation hypothesis, and derive a pathwise version of It\^o's formula for paths with finite quadratic variation, in which the quadratic variation term may depend on a partition sequence, but the integral does not. As a first step, we derive the existence of $\gamma$-Stratonovich integrals for any $\gamma\in[0,1].$ To do so, we will need the following technical lemma, the easy proof of which is left to the reader.

\begin{lemma}\label{lem:quad}
  Let $p\ge 1$, $(\pi_n)$ be an increasing sequence of partitions such that $\lim_{n \to \infty} \vert \pi_n\vert=0$, $X\in \mathcal{V}^p([0,T],\R^d)$ with quadratic variation along $(\pi_n)$ and $Y \in \CC^q_X $. In this case the \emph{quadratic covariation} of $X$ and $Y$ exists and is given by
  \begin{equation*}
    [ Y,X ]_T := \lim_{n \to \infty} \sum_{[s,t] \in \pi_n}\langle X_{s,t},Y_{s,t}\rangle = \sum_{1 \leq i,j \leq d} \int_0^T Y^\pr_t (i,j) \dd^{\pi_n} [X^i,X^j]_t,
  \end{equation*}
  where $Y_t^\pr = (Y_t^\pr(i,j))_{1\leq i,j\leq d}$, for $0\le t\le T$.
\end{lemma}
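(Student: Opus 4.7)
The strategy is to use the controlled-path expansion $Y_{s,t} = Y^\pr_s X_{s,t} + R^Y_{s,t}$ to split the Riemann sums into a quadratic piece and a higher-order remainder,
\begin{equation*}
 \sum_{[s,t]\in\pi_n}\langle X_{s,t},Y_{s,t}\rangle = \sum_{1\le i,j\le d}\sum_{[s,t]\in\pi_n} Y^\pr_s(i,j)\, X^i_{s,t} X^j_{s,t} + \sum_{[s,t]\in\pi_n}\langle X_{s,t},R^Y_{s,t}\rangle,
\end{equation*}
and to treat the two pieces by different tools: the remainder is dispatched as in the proofs of Section \ref{sec:levy}, while the quadratic piece is handled through the quadratic variation hypothesis combined with polarization.

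For the remainder I would pick a common control function $\omega$ with $|X_{s,t}|^p\le\omega(s,t)$ and $|R^Y_{s,t}|^r\le\omega(s,t)$. Setting $\theta := 1/p + 1/r = 2/p + 1/q > 1$, the elementary estimate $|\langle X_{s,t},R^Y_{s,t}\rangle|\le\omega(s,t)^\theta$ combined with superadditivity of $\omega$ yields
\begin{equation*}
 \sum_{[s,t]\in\pi_n}|\langle X_{s,t},R^Y_{s,t}\rangle| \le \sum_{[s,t]\in\pi_n}\omega(s,t)^\theta \le \omega(0,T)\max_{[s,t]\in\pi_n}\omega(s,t)^{\theta-1},
\end{equation*}
which tends to $0$ as $|\pi_n|\to 0$ by continuity of $\omega$ on the diagonal.

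For the quadratic piece, fix $i,j$. By polarization and the hypothesis that $X^i$, $X^j$ and $X^i+X^j$ all admit quadratic variation along $(\pi_n)$, the partial sums
\begin{equation*}
 V^{i,j}_n(t) := \sum_{[u,v]\in\pi_n} X^i_{u\wedge t,\,v\wedge t} X^j_{u\wedge t,\,v\wedge t} = \tfrac{1}{2}\bigl([X^i+X^j]^n_t-[X^i]^n_t-[X^j]^n_t\bigr)
\end{equation*}
converge uniformly in $t\in[0,T]$ to $[X^i,X^j]_t$ (by the remark following the definition), and their total variations are uniformly bounded in $n$ via $|ab|\le\tfrac{1}{2}(a^2+b^2)$. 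Since $Y^\pr(i,j)\in\mathcal{V}^q$ is continuous, a Helly-type Riemann--Stieltjes approximation — replace $Y^\pr(i,j)$ by its left-endpoint step function along $\pi_n$, control one error by uniform continuity of $Y^\pr(i,j)$ and the total-variation bound, the other by uniform convergence of $V^{i,j}_n$ via summation by parts — gives
\begin{equation*}
 \sum_{[s,t]\in\pi_n} Y^\pr_s(i,j)\, X^i_{s,t} X^j_{s,t}\;\longrightarrow\; \int_0^T Y^\pr_t(i,j)\dd[X^i,X^j]_t,
\end{equation*}
which is precisely the F\"ollmer integral denoted $\int_0^T Y^\pr_t(i,j)\dd^{\pi_n}[X^i,X^j]_t$. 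Summing over $i,j$ and combining with the vanishing remainder gives both the existence of $[Y,X]_T$ and the stated representation. The technically most delicate step is this last convergence; once the uniform-convergence-plus-bounded-total-variation structure of the $V^{i,j}_n$ is in hand, it reduces to a classical Helly / summation-by-parts estimate applied to the continuous integrand $Y^\pr(i,j)$.
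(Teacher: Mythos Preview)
The paper does not actually prove this lemma; it states that ``the easy proof \ldots\ is left to the reader.'' Your argument is correct and is almost certainly what the authors had in mind: split via the controlled-path expansion, kill the remainder with the standard $\omega(s,t)^\theta$ estimate (exactly as in Theorem~\ref{thm:levy1} and Theorem~\ref{thm:levy2}), and identify the quadratic piece as a Riemann--Stieltjes integral against $[X^i,X^j]$.

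One simplification for the quadratic piece: you do not need the Helly / summation-by-parts machinery. By definition of quadratic variation along $(\pi_n)$, the positive discrete measures $\sum_{[s,t]\in\pi_n}(X^k_{s,t})^2\delta_s$ converge weakly for $k\in\{i,j,i+j\}$, hence by polarization the signed measures $\mu_n^{i,j}:=\sum_{[s,t]\in\pi_n}X^i_{s,t}X^j_{s,t}\,\delta_s$ converge weakly to the measure with distribution function $[X^i,X^j]$. Since $Y^\pr(i,j)$ is continuous on $[0,T]$, testing against it gives directly
\[
\sum_{[s,t]\in\pi_n} Y^\pr_s(i,j)\,X^i_{s,t}X^j_{s,t}=\int_{[0,T]} Y^\pr_s(i,j)\,\mu_n^{i,j}(\dd s)\;\longrightarrow\;\int_0^T Y^\pr_t(i,j)\dd [X^i,X^j]_t,
\]
with no need to track total variation or perform summation by parts. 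This is presumably why the authors call the proof ``easy.''
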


\begin{theorem}\label{thm:foll}
  Let $X \in \mathcal{V}^p ([0,T],\R^d)$, $ Y \in\CC_X^{q}$ and suppose $Y^\prime_t$ is a symmetric matrix for all $t \in [0,T]$ or $Y \in \SP^q_X $. Let $(\pi_n)$ be an increasing sequence of partitions such that $\lim_{n \to \infty}\vert \pi_n \vert = 0$ and $X$ has quadratic variation along $(\pi_n)$. Then for all $\gamma \in [0,1]$ the $\gint Y_t \dd^{\pi_n} X_t$ integral exists and is given by
  \begin{equation*}
    \gint_0^T Y_t \dd^{\pi_n} X_t =  \int_0^T  Y_t \circ \dd X_t  + \frac{1}{2} (2 \gamma -1) \sum_{1 \leq i,j \leq d}  \int_0^T Y^\pr_t (i,j) \dd^{\pi_n} [X^i,X^j]_t,
  \end{equation*}
  where $Y_t^\pr = (Y_t^\pr(i,j))_{1\leq i,j\leq d}$.
\end{theorem}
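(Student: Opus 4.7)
The plan is to exploit the symmetric/antisymmetric decomposition \eqref{eq:sa}, which writes $\gint_0^T Y_t \dd^{\pi_n} X_t = \tfrac{1}{2}\mathbb{S}_\gamma\langle X,Y\rangle + \tfrac{1}{2}\mathbb{A}_\gamma\langle X,Y\rangle$. The antisymmetric half requires no new work: Theorem \ref{thm:levy2} delivers the existence of $\mathbb{A}_\gamma\langle X,Y\rangle$ under either of our two hypotheses on $Y$, and also guarantees that the limit is independent of $\gamma$ and of the partition sequence, equalling $\mathbb{A}\langle X,Y\rangle$. All the $\gamma$- and partition-dependence will therefore sit in the symmetric part, which is exactly where F\"ollmer's quadratic variation must enter.

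For the symmetric part I would apply the pointwise algebraic identity
\begin{equation*}
\langle Y_s+\gamma Y_{s,t},X_{s,t}\rangle+\langle X_s+\gamma X_{s,t},Y_{s,t}\rangle=\langle X,Y\rangle_{s,t}+(2\gamma-1)\langle X_{s,t},Y_{s,t}\rangle,
\end{equation*}
which follows from expanding $\langle X_t,Y_t\rangle-\langle X_s,Y_s\rangle$. Summing over $[s,t]\in\pi_n$, the first term on the right telescopes to $\langle X_T,Y_T\rangle-\langle X_0,Y_0\rangle$, independently of $n$. The second term is precisely a Riemann sum of the type handled by Lemma \ref{lem:quad}, so it converges to $(2\gamma-1)[Y,X]_T=(2\gamma-1)\sum_{i,j}\int_0^T Y'_t(i,j)\dd^{\pi_n}[X^i,X^j]_t$. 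Hence
\begin{equation*}
\mathbb{S}_\gamma\langle X,Y\rangle=\langle X_T,Y_T\rangle-\langle X_0,Y_0\rangle+(2\gamma-1)\sum_{1\le i,j\le d}\int_0^T Y'_t(i,j)\dd^{\pi_n}[X^i,X^j]_t.
\end{equation*}

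Finally I would reassemble: plugging the two halves into \eqref{eq:sa} and invoking Corollary \ref{cor:Stra}, which identifies $\tfrac{1}{2}(\langle Y_T,X_T\rangle-\langle Y_0,X_0\rangle)+\tfrac{1}{2}\mathbb{A}\langle X,Y\rangle$ as $\int_0^T Y_t\sd X_t$, yields the claimed formula after cancellation of the factor $\tfrac{1}{2}$ against $(2\gamma-1)$. The existence of the $\gamma$-integral along $(\pi_n)$ follows a posteriori from the existence of each summand: $\int Y\sd X$ from Corollary \ref{cor:Stra}, and the quadratic-covariation term from Lemma \ref{lem:quad}.

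I do not expect a serious obstacle; the key point is really bookkeeping. The only step that needs care is the verification of the algebraic identity above (handling the cross terms $\gamma\langle Y_{s,t},X_{s,t}\rangle+\gamma\langle X_{s,t},Y_{s,t}\rangle=2\gamma\langle X_{s,t},Y_{s,t}\rangle$ correctly) and making sure that the Gubinelli derivative $Y'$ used in Lemma \ref{lem:quad} is indeed the same as the one appearing on the right-hand side of the theorem's formula. Under the alternative hypothesis $Y\in\SP^q_X$ the same argument goes through verbatim because Theorem \ref{thm:levy2} and Lemma \ref{lem:quad} both apply, so no separate case analysis is required.
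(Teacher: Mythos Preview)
Your proposal is correct and follows essentially the same route as the paper: the symmetric/antisymmetric split \eqref{eq:sa}, Theorem~\ref{thm:levy2} for the antisymmetric half, the same algebraic identity reducing the symmetric half to a telescoping term plus $(2\gamma-1)\sum\langle X_{s,t},Y_{s,t}\rangle$, Lemma~\ref{lem:quad} for the latter, and Corollary~\ref{cor:Stra} to repackage the result in terms of the Stratonovich integral. The only cosmetic difference is that the paper first extracts the relation $\tfrac{1}{2}\mathbb{A}\langle X,Y\rangle=\int Y\sd X-\tfrac{1}{2}(\langle X_T,Y_T\rangle-\langle X_0,Y_0\rangle)$ as an intermediate equation before handling the symmetric part, whereas you reassemble at the end; the content is identical.
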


\begin{proof}
  Fix $\gamma \in [0,1]$. As before we split the sum as in \eqref{eq:sa} into its symmetric and antisymmetric part:
  \begin{align*}
    \sum_{[s,t] \in \pi_n} \langle Y_s+ \gamma Y_{s,t}, X_{s,t}\rangle
      = & \frac{1}{2} \sum_{[s,t] \in \pi_n} \big(\langle Y_s+ \gamma Y_{s,t}, X_{s,t}\rangle+ \langle X_s+ \gamma X_{s,t}, Y_{s,t}\rangle \big) \\
        & + \frac{1}{2} \sum_{[s,t] \in \pi_n} \big(\langle Y_s+ \gamma Y_{s,t}, X_{s,t} \rangle - \langle X_s+ \gamma X_{s,t}, Y_{s,t}\rangle \big).
  \end{align*}
  The second sum converges for every sequence of partitions $(\pi_n)$ with $\lim_{n \to \infty} \vert \pi_n \vert = 0$ and is independent of $\gamma$ thanks to Theorem \ref{thm:levy2}. Taking $\gamma=1/2$ we can apply Corollary \ref{cor:Stra} to see that
  \begin{equation}\label{eq:sym2}
   \frac{1}{2} \mathbb{A}\langle X,Y\rangle = \int_0^T  Y_t \circ \dd X_t - \frac{1}{2}\big ( \langle X_{T}, Y_{T}\rangle-\langle X_{0},Y_{0}\rangle\big ).
  \end{equation}
  For the symmetric part, we note for $(s,t)\in\Delta_T$
  \begin{align*}
     \langle Y_s + \gamma Y_{s,t}, X_{s,t}\rangle+ \langle X_s+ \gamma X_{s,t}, Y_{s,t}\rangle 
     = & (1-\gamma) \big(\langle Y_t, X_t \rangle- \langle Y_s, X_s \rangle - \langle X_{s,t},Y_{s,t}\rangle\big)\\
      &+ \gamma \big( \langle Y_t, X_t\rangle - \langle Y_s, X_s \rangle +\langle X_{s,t}, Y_{s,t}\rangle\big) \\
     =&  \langle Y_t, X_t \rangle- \langle Y_s, X_s\rangle + (2\gamma-1)\langle X_{s,t},Y_{s,t}\rangle.
  \end{align*}
  Thus the first sum reduces to
  \begin{align*}
    \frac{1}{2}\sum_{[s,t] \in \pi_n}  \big(\langle Y_s+ \gamma Y_{s,t},& X_{s,t}\rangle +  \langle X_s+ \gamma X_{s,t}, Y_{s,t}\rangle\big) \\
     & =  \frac{1}{2} \big( \langle Y_T, X_T\rangle - \langle Y_0, X_0\rangle \big) +  \frac{2\gamma-1}{2} \sum_{[s,t] \in \pi_n} \langle X_{s,t}, Y_{s,t}\rangle.
  \end{align*}
  Therefore, the symmetric part converges along $(\pi_n)$, and the assertion follows by \eqref{eq:sym2} and Lemma \ref{lem:quad}.\\
  The statement for $Y \in \SP^q_X$ can be proven analogously.
\end{proof}

An application of Theorem \ref{thm:foll} to the particular case $Y = \DD F(X)$ for a smooth enough function $F$ provides the classical Stratonovich formula.

\begin{lemma}\label{lem:levy2}
  Let $1\leq p<3$, $X \in \mathcal{V}^p([0,T],\R^d)$ and $F \in C^2(\mathbb{R}^d,\mathbb{R})$. Suppose that the second derivative $\DD^2 F$ is $\alpha$-H\"older continuous of order $\alpha > \max\{p-2,0\}$. Then the Stratonovich integral $\int  \DD F(X_t) \circ \dd X_t $  exists and is given by
  \begin{equation*}
    \int_0^T \DD F(X_t) \circ \dd X_t = F(X_T) - F(X_0).
  \end{equation*}
\end{lemma}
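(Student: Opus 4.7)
\bigskip
\noindent\textbf{Proof plan for Lemma \ref{lem:levy2}.}
The strategy is to recognize $Y_t := \DD F(X_t)$ as a path controlled by $X$, apply Corollary \ref{cor:Stra} to produce the Stratonovich integral, and then identify the value by comparing the symmetric Riemann sums to the telescoping sum $\sum (F(X_t)-F(X_s))$ via a second-order Taylor expansion.

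\smallskip
\noindent\emph{Step 1: Controlled path structure.} Set $Y_s' := \DD^2 F(X_s)$, which is a \emph{symmetric} matrix for every $s\in[0,T]$ since $F\in C^2$. The $\al$-H\"older continuity of $\DD^2 F$ combined with $X\in\mathcal{V}^p$ yields
\[
  |Y'_{s,t}| = |\DD^2 F(X_t)-\DD^2 F(X_s)| \le C |X_{s,t}|^\al,
\]
so $Y'\in\mathcal{V}^{q}([0,T],\R^{d\times d})$ with $q:=p/\al$. By the integral form of the mean value theorem,
\[
  R^Y_{s,t} := Y_{s,t} - Y'_s X_{s,t} = \int_0^1 \bigl[\DD^2 F(X_s + u X_{s,t}) - \DD^2 F(X_s)\bigr]\,X_{s,t}\,\dd u,
\]
and H\"older continuity of $\DD^2 F$ gives $|R^Y_{s,t}|\le C|X_{s,t}|^{1+\al}$. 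With $1/r = 1/p+1/q = (1+\al)/p$ this bound shows $\sum_{[s,t]\in\pi}|R^Y_{s,t}|^r \le C^r \sum |X_{s,t}|^p$ is uniformly bounded, i.e.\ $\|R^Y\|_r<\infty$. The assumption $\al > \max\{p-2,0\}$ yields $2/p + 1/q = (2+\al)/p > 1$, hence $Y\in\CC_X^q$.

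\smallskip
\noindent\emph{Step 2: Existence of the integral.} Since $Y'_s = \DD^2 F(X_s)$ is symmetric, Corollary \ref{cor:Stra} applies and the Stratonovich integral $\int_0^T \DD F(X_t)\sd X_t$ exists as the limit along any partition sequence of mesh tending to zero.

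\smallskip
\noindent\emph{Step 3: Identification of the value.} For each partition interval $[s,t]$, Taylor's formula with integral remainder gives both
\[
  F(X_t)-F(X_s) = \langle \DD F(X_s),X_{s,t}\rangle + \int_0^1 (1-u)\,\langle \DD^2 F(X_s+uX_{s,t})X_{s,t},X_{s,t}\rangle\,\dd u
\]
and
\[
  \tfrac{1}{2}\langle \DD F(X_s)+\DD F(X_t),X_{s,t}\rangle = \langle \DD F(X_s),X_{s,t}\rangle + \tfrac{1}{2}\int_0^1 \langle \DD^2 F(X_s+uX_{s,t})X_{s,t},X_{s,t}\rangle\,\dd u.
\]
Subtracting and using $\int_0^1(u-\tfrac{1}{2})\dd u = 0$ to subtract the constant matrix $\DD^2 F(X_s)$ inside the remaining integral, the $\al$-H\"older estimate for $\DD^2 F$ yields
\[
  \bigl|\tfrac{1}{2}\langle \DD F(X_s)+\DD F(X_t),X_{s,t}\rangle - (F(X_t)-F(X_s))\bigr| \le C |X_{s,t}|^{2+\al}.
\]
Picking a control $\omega$ with $|X_{s,t}|^p\le\omega(s,t)$ and setting $\theta:=(2+\al)/p>1$, superadditivity gives
\[
  \sum_{[s,t]\in\pi} |X_{s,t}|^{2+\al} \le \sum_{[s,t]\in\pi} \omega(s,t)^\theta \le \Bigl(\max_{[s,t]\in\pi}\omega(s,t)\Bigr)^{\theta-1}\omega(0,T) \xrightarrow{|\pi|\to 0} 0,
\]
by uniform continuity of $\omega$. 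Since the Riemann sum $\sum \langle \DD F(X_s)+\tfrac{1}{2}\DD F(X)_{s,t},X_{s,t}\rangle$ coincides with $\sum \tfrac{1}{2}\langle \DD F(X_s)+\DD F(X_t),X_{s,t}\rangle$ and the telescoping sum $\sum (F(X_t)-F(X_s))$ equals $F(X_T)-F(X_0)$, passing to the limit yields the claimed identity.

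\smallskip
\noindent\emph{Main obstacle.} Step 1 is where the exponent bookkeeping must balance just right: one needs $\al$-H\"older regularity of $\DD^2 F$ to boost the Gubinelli derivative's variation and to give a remainder of order $1+\al$, and this is exactly the content of the hypothesis $\al>\max\{p-2,0\}$. The rest of the argument is a routine second-order Taylor expansion together with the standard $\omega^{\theta-1}$ trick used throughout this section.
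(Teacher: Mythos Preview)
Your proof is correct. Steps 1 and 2 match the paper's argument (your exponent bookkeeping in Step 1 is in fact more careful than the paper's, which writes ``$r=p/2$'' without explanation). The genuine difference is in Step 3, the identification of the value.

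The paper exploits the partition-independence established in Corollary~\ref{cor:Stra} in a different way: since the Stratonovich integral does not depend on the sequence $(\pi_n)$, the authors invoke Proposition~70 of \cite{Freedman1983} to \emph{choose} a sequence along which $[X]\equiv 0$. Along such a sequence, the Taylor remainder terms are bounded by $\phi(|X_{s,t}|)\,|X_{s,t}|^2$ with $\phi(c)\to 0$, and these sums vanish trivially because $\sum |X_{s,t}|^2\to 0$. Only continuity of $\DD^2 F$ is used in that step; the H\"older exponent $\alpha$ enters solely through the controlled-path structure needed for Corollary~\ref{cor:Stra}.

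Your route avoids the external reference to \cite{Freedman1983}: you squeeze the extra factor $|X_{s,t}|^\alpha$ out of the trapezoidal error via the integral remainder and the identity $\int_0^1(u-\tfrac12)\,\dd u=0$, obtaining a bound $C|X_{s,t}|^{2+\alpha}$ whose sum dies by the standard $\omega^{\theta-1}$ argument for \emph{any} partition sequence. This is more self-contained and makes transparent why the hypothesis $\alpha>\max\{p-2,0\}$ is the right threshold both for existence and for identification. The paper's trick, on the other hand, is a nice illustration of how partition-independence can be leveraged: once you know the limit is the same along every sequence, you may compute it along the most convenient one.
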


\begin{proof}
  Let $X = (X^1, \dots , X^d)^*\in \mathcal{V}^p([0,T],\R^d)$ for $1 \leq p < 3$. Then, with $r=\frac{p}{2}$ in the definition of controlled paths we easily see that $\DD F(X) \in \CC^{p}_X$. Thus by Corollary \ref{cor:Stra} the ($\frac{1}{2}$-)Stratonovich integral is well-defined and independent of the chosen sequence of partitions $(\pi_n)$ along which the limit is taken. Now choose an increasing sequence of partitions $(\pi_n)$ such that $\lim_{n \to \infty} \vert \pi_n \vert = 0$ and $[ X ]_t = 0$ along $(\pi_n)$ for $t \in [0,T]$ (cf. Proposition 70 in \cite{Freedman1983}). Applying Taylor's theorem to $F$, we observe that
  \begin{align*}
    F(X_T) -& F( X_0) = \frac{1}{2}\sum_{[s,t] \in \pi_n} \big ( (F(X_t) - F(X_s)) -  (F(X_s) - F(X_t))\big ) \\
     =& \sum_{[s,t] \in \pi_n} \langle \frac{1}{2} \DD F(X_s) + \frac{1}{2} \DD F(X_t), X_{s,t} \rangle  +  \sum_{[s,t] \in \pi_n} ( R(X_s,X_t) + \tilde R(X_s,X_t) )\\
      &+ \frac{1}{4} \sum_{[s,t] \in \pi_n} \sum_{1 \leq i,j \leq d} (\DD^2_{i,j}(X_s)- \DD^2_{i,j}(X_t)) X_{s,t}^i X_{s,t}^j, \\
  \end{align*}
  where $\vert R(x,y)\vert +\vert \tilde R (x,y)\vert \leq \phi(\vert x - y \vert ) \vert x-y \vert^2,$ for some increasing function $\phi \colon [0,\infty) \to \R$ such that $\phi (c) \to 0$ as $c \to 0$. Since $X$ is continuous and has zero quadratic variation along $(\pi_n)$, the last two terms converge to $0$ as $n\to \infty$, and we obtain
  \begin{align*}
    \int_0^T \DD F(X_t)  \circ \dd X_t  &= \lim_{n \to \infty} \sum_{[s,t] \in \pi_n} \langle\DD F(X_s) + \frac{1}{2}( \DD F(X_t)-\DD F(X_s)), X_{s,t}\rangle\\
    &= F(X_T) - F(X_0).
  \end{align*}
\end{proof}

We can now present the announced version of the pathwise formula by F\"ollmer (cf. \cite{Follmer1979}), for which the proof reduces to combining the previous results of Theorem \ref{thm:foll} and Lemma \ref{lem:levy2}.

\begin{corollary}\label{cor:follmer}
  Let $1 \leq p < 3$, $\gamma \in [0,1]$ and $(\pi_n)$ be an increasing sequence of partitions such that $\lim_{n \to \infty} \vert \pi_n\vert=0$. Assume $F \in C^2(\mathbb{R}^d, \mathbb{R})$ with $\alpha$-H\"older continuous second derivative $\DD^2 F$ for some $\alpha > \max\{p-2,0\}$. If $X\in \mathcal{V}^p([0,T], \mathbb{R}^d)$ has quadratic variation along $(\pi_n)$, then the formula
  \begin{align*}
    F(X_T) =  F(X_0) &+ \gint_0^T \DD F(X_t) \dd^{\pi_n} X_t \\
             &- \frac{1}{2} (2 \gamma-1) \sum_{1 \leq i,j \leq d}  \int_0^T \DD_{i,j}^2 F(X_s) \dd^{\pi_n} [X^i,X^j]_s
  \end{align*}
  holds.
\end{corollary}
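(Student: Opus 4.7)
The plan is to combine Theorem \ref{thm:foll} applied to $Y := \DD F(X)$ with Lemma \ref{lem:levy2}, which identifies the Stratonovich integral $\int_0^T \DD F(X_t)\sd X_t$ with the plain increment $F(X_T) - F(X_0)$; the claim then follows by a direct rearrangement.

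First I would verify that the hypotheses of Theorem \ref{thm:foll} are met for $Y = \DD F(X)$. Taylor's formula together with the $\alpha$-H\"older continuity of $\DD^2 F$ yields
\begin{equation*}
  \DD F(X_t) - \DD F(X_s) = \DD^2 F(X_s)(X_t - X_s) + R^{\DD F(X)}_{s,t}, \qquad |R^{\DD F(X)}_{s,t}| \leq C |X_{s,t}|^{1+\alpha},
\end{equation*}
so the natural candidate for the Gubinelli derivative is $(\DD F(X))'_s = \DD^2 F(X_s)$. Since $\DD^2 F$ is $\alpha$-H\"older and $X \in \mathcal{V}^p([0,T],\R^d)$, the composition $\DD^2 F(X)$ lies in $\mathcal{V}^{p/\alpha}$, so I take $q := p/\alpha$. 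Then $1/r = 1/p + 1/q = (1+\alpha)/p$, and the remainder bound gives $|R^{\DD F(X)}_{s,t}|^r \leq C^r |X_{s,t}|^p$, hence $\|R^{\DD F(X)}\|_r < \infty$. The controlled-path requirement $2/p + 1/q > 1$ is equivalent to $\alpha > p - 2$, which is part of the hypothesis (and automatic for $p \leq 2$). Finally, $\DD^2 F(X_s)$ is a symmetric matrix at every $s$ by the equality of mixed partials for $C^2$ functions, so the symmetry assumption of Theorem \ref{thm:foll} is satisfied.

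With these verifications in place, Theorem \ref{thm:foll} applied to $Y = \DD F(X)$ produces
\begin{equation*}
  \gint_0^T \DD F(X_t) \dd^{\pi_n} X_t = \int_0^T \DD F(X_t) \sd X_t + \frac{2\gamma - 1}{2} \sum_{1 \leq i,j \leq d} \int_0^T \DD^2_{i,j} F(X_s) \dd^{\pi_n} [X^i, X^j]_s,
\end{equation*}
and Lemma \ref{lem:levy2}, whose hypotheses ($1 \le p < 3$, $F \in C^2$, $\DD^2 F$ is $\alpha$-H\"older with $\alpha > \max\{p-2, 0\}$) are exactly those of the corollary, identifies the Stratonovich integral on the right with $F(X_T) - F(X_0)$. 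Rearranging gives the stated formula.

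There is no real obstacle: the corollary is essentially a packaging of the two preceding results. The only point requiring care is the choice of the controlled-path exponents; the hypothesis $\alpha > \max\{p-2, 0\}$ is precisely what makes the pair $(q, r) = (p/\alpha,\, p/(1+\alpha))$ admissible and simultaneously yields summability of $R^{\DD F(X)}$, so that Theorem \ref{thm:foll} can be invoked together with Lemma \ref{lem:levy2}.
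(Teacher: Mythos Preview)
Your proposal is correct and follows essentially the same route as the paper: the authors state explicitly that the proof ``reduces to combining the previous results of Theorem~\ref{thm:foll} and Lemma~\ref{lem:levy2}'', which is exactly what you do. Your additional paragraph verifying that $\DD F(X)\in\CC^q_X$ with symmetric Gubinelli derivative $\DD^2 F(X)$ (via the explicit exponents $q=p/\alpha$, $r=p/(1+\alpha)$) is a welcome elaboration of a step the paper only sketches inside the proof of Lemma~\ref{lem:levy2}.
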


The assumptions, that $X$ is of finite $p$-variation for some $1 \leq p < 3$ and that the second derivative $\DD^2 F$ is $\alpha$-H\"older continuous for some $\alpha > \max\{p-2,0\}$ can be considered as the price we have to pay for obtaining an integral of which the antisymmetric part does not depend on the chosen partition sequence. F\"ollmer \cite{Follmer1979} does not need these hypotheses and especially not that the integrand is controlled by the integrator. This leads to a much bigger class of admissible integrands as we will see in the next subsection.

\subsection{Functional It\^o formula}\label{sec:functional}

In recent years, functional It\^o calculus which extends classical calculus to functionals depending on the whole path of a stochastic process and not only on its current value, has received much attention. Based on the notion of derivatives due to Dupire \cite{Dupire2009}, in a series of papers Cont and Fourni\'e \cite{Cont2010,Cont2010a,Cont2013} developed a functional It\^o formula. One drawback of their approach is that the involved functional has to be defined on the space of c\`adl\`ag functions, or at least on a subspace strictly larger than $C([0,T],\R^d)$ (see \cite{Cosso2014}), and not only on $C([0,T],\R^d)$. In the spirit of F\"ollmer the paper \cite{Cont2010a} provides a non-probabilistic version of a probabilistic It\^o formula shown in \cite{Cont2010,Cont2013}.

The present subsection takes reference to this program. We generalize F\"ollmer's pathwise It\^o formula (cf. \cite{Follmer1979} or Corollary \ref{cor:follmer}) to twice Fr\'echet differentiable functionals defined on the space of continuous functions. Our functional It\^o formula might be seen as the pathwise analogue to formulas stated in \cite{Ahn1997}.

First we have to fix some further notation. Let $(\pi_n)$ be an increasing sequence of partitions such that $\lim_{n \to \infty} \vert \pi_n\vert=0$ and $ X \in C([0,T],\R^d)$. We denote by $X^n$ the piecewise linear approximation of $X$ along $(\pi_n)$, i.e.
\begin{equation}\label{eq:approx2}
  X^n_t := \frac{X_{t^n_{j+1}}-X_{t^n_j}}{t_{j+1}^n-t_{j}^n}(t-t^n_j)+X_{t^n_j}, \quad t\in [t^n_j,t^n_{j+1}),\quad \text{for }[t^n_j,t^n_{j+1}]\in \pi_n.
\end{equation}
In the following $\mathcal{C}$ stands for $C([0,T],\R^d)$ and $\mathcal{C}^*$ for the dual space of $\mathcal{C}$. For $X\in \mathcal{C}$ we define $X^t_s:=X_s\1_{[0,t)}(s)+X_t\1_{[t,T]}(s)$ and $X^{n,t}_s:=X^{n}_s\1_{[0,t)}(s)+X^{n}_t\1_{[t,T]}(s)$ for $s \in [0,T]$, where $\1_{[t,T]}$ is the indicator function of the interval $[t,T]$. Assume $F\colon \mathcal{C}\to \R$ is twice continuously (Fr\'echet) differentiable. That is, $\DD F \colon \mathcal{C}\to \mathcal{C}^*$ and $\DD^2 F \colon \mathcal{C}\to \mathcal{L}(\mathcal{C},\mathcal{C}^*)$ are continuous with respect to the corresponding norms. It is well-known that $\mathcal{L}(\mathcal{C},\mathcal{C}^*)$ is isomorphic to $\mathcal{C}\otimes\mathcal{C}$. For each $t\in [0,T]$ we can understand $\1_{[t,T]}$ as an element of $\mathcal{C}^{**}$, the bidual of $\mathcal{C}$, and $\1_{[t,T]}\otimes\1_{[t,T]}$ as an element in $(\mathcal{C}\otimes\mathcal{C})^{**}$, respectively. Hence, $\langle\DD F(X^s),\1_{[s,T]}\rangle$ and $\langle\DD^2 F(X^s),\1_{[s,T]}\otimes \1_{[s,T]}\rangle$ are well-defined as dual pairs.

\begin{theorem}\label{thm:functionalIto}
  Let $(\pi_n)$ be an increasing sequence of partitions such that the mesh satisfies $\lim_{n \to \infty} \vert \pi_n\vert=0$, and $X\in \mathcal{C}$ with quadratic variation along $(\pi_n)$. Suppose $F\colon [0,T] \times \mathcal{C}\to \R$ is continuously differentiable with respect to the first argument and twice continuously differentiable with respect to the second. Furthermore, assume that $\partial_t F$ and $\DD^2 F$ are bounded and uniformly continuous. Then, for all $t \in [0,T]$ we have
  \begin{align}\label{eq:functionalIto}
    F(t,X^t)= &F(0,X^0) + \int_0^t \partial_t F(s,X^s) \dd s+\sum_{i=1}^d \int_0^t \langle\DD_i F(s,X^s),\1_{[s,T]}\rangle\dd^{\pi_n} X^i_s\nonumber \\
     &+ \frac{1}{2}\sum_{i,j=1}^d\int_0^t \langle\DD^2_{i,j}F(s,X^s),\1_{[s,T]}\otimes \1_{[s,T]}\rangle\dd [ X^i,X^j]_s,
  \end{align}
  where the integral is given by
  \begin{align*}
    \sum_{i=1}^d \int_0^t  \langle\DD_i F(s,X^s),\1_{[s,T]} &\rangle\dd^{\pi_n} X^i_s \\
     &:= \lim_{n\to \infty} \sum_{i=1}^d \sum_{[t^n_k,t^n_{k+1}]\in \pi_n(t)} \langle\DD_i F(t^n_k,X^{n,t^n_k}), \eta^n_{t^n_j}\rangle X^i_{t^n_k,t^n_{k+1}},
  \end{align*}
  where $\pi_n(t):=\{[u,v\wedge t]\,:\,[u,v]\in \pi_n,\, u<t \}$ and $\eta^n_{t^n_j}$ for $[t^n_j,t^n_{j+1}]\in \pi_n(t)$ by
  \begin{equation*}
    \eta^n_{t^n_j}(s):=\frac{(s \vee t^n_{k+1})-t^n_j}{t^n_{k+1}-t_k^n}\1_{[t^n_k,T]}(s),  \quad s \in [0,T].
  \end{equation*}
\end{theorem}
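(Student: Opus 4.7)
The plan is to run F\"ollmer's telescoping argument along the piecewise linear approximation $X^n$, combined with a second-order Taylor expansion of $F(t^n_k,\cdot)$ in the path variable. Starting from the telescoping identity
\begin{equation*}
F(t,X^{n,t}) - F(0,X^{n,0}) = \sum_{[t^n_k,t^n_{k+1}]\in \pi_n(t)} \bigl(F(t^n_{k+1}, X^{n,t^n_{k+1}}) - F(t^n_k, X^{n,t^n_k})\bigr),
\end{equation*}
I split each summand into a time part $F(t^n_{k+1}, X^{n,t^n_{k+1}}) - F(t^n_k, X^{n,t^n_{k+1}}) = \int_{t^n_k}^{t^n_{k+1}} \partial_t F(s, X^{n,t^n_{k+1}})\dd s$ and a path part $F(t^n_k, X^{n,t^n_{k+1}}) - F(t^n_k, X^{n,t^n_k})$. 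Boundedness and uniform continuity of $\partial_t F$, together with the uniform convergence $X^n\to X$ (so that $X^{n,t^n_{k+1}}\to X^s$ uniformly in $s\in[t^n_k,t^n_{k+1}]$), imply via dominated convergence that the time parts sum to $\int_0^t \partial_t F(s,X^s)\dd s$ in the limit $n\to\infty$.

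For the path part, a direct computation from \eqref{eq:approx2} gives $\Delta^n_k:=X^{n,t^n_{k+1}} - X^{n,t^n_k} = X_{t^n_k,t^n_{k+1}}\,\eta^n_{t^n_k}$ as an element of $\mathcal{C}$, where $\eta^n_{t^n_k}$ is precisely the function from the statement: zero on $[0,t^n_k)$, linear from $0$ to $1$ on $[t^n_k,t^n_{k+1}]$, and identically $1$ on $[t^n_{k+1},T]$. A second-order Taylor expansion of $F(t^n_k,\cdot)$ in $\mathcal{C}$ around $X^{n,t^n_k}$ then yields
\begin{align*}
F(t^n_k, X^{n,t^n_{k+1}}) - F(t^n_k, X^{n,t^n_k})
&=\sum_{i=1}^d \langle\DD_i F(t^n_k,X^{n,t^n_k}),\eta^n_{t^n_k}\rangle X^i_{t^n_k,t^n_{k+1}}\\
&\quad +\tfrac{1}{2}\sum_{i,j=1}^d \langle\DD^2_{i,j}F(t^n_k,X^{n,t^n_k}),\eta^n_{t^n_k}\otimes\eta^n_{t^n_k}\rangle X^i_{t^n_k,t^n_{k+1}}X^j_{t^n_k,t^n_{k+1}} + r^n_k,
\end{align*}
where $|r^n_k|\le \omega_F(|X_{t^n_k,t^n_{k+1}}|)\,|X_{t^n_k,t^n_{k+1}}|^2$ for a modulus of continuity $\omega_F$ of $\DD^2 F$, using $\|\Delta^n_k\|_\infty=|X_{t^n_k,t^n_{k+1}}|$. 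Summed over $k$, the remainder is bounded by $\omega_F(\sup_k|X_{t^n_k,t^n_{k+1}}|)\sum_k|X_{t^n_k,t^n_{k+1}}|^2$, which vanishes as $n\to\infty$ by uniform continuity of $X$ and convergence of the quadratic variation sums to $[X]_t$.

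After summation over $k$, the first-order piece is by construction the approximating sum of $\sum_i\int_0^t\langle\DD_i F(s,X^s),\1_{[s,T]}\rangle\dd^{\pi_n} X^i_s$, so its convergence will be established \emph{a posteriori} once the other terms are dealt with. For the second-order piece, uniform continuity of $\DD^2 F$ in both arguments together with $X^n\to X$ uniformly allows replacement of $\DD^2 F(t^n_k,X^{n,t^n_k})$ by $\DD^2 F(t^n_k,X^{t^n_k})$; next, since $\eta^n_{t^n_k}$ agrees with $\1_{[t^n_k,T]}$ off the vanishingly small interval $[t^n_k,t^n_{k+1}]$, the weak-$*$ extension of $\DD^2 F$ to the bidual described in the paragraph preceding the theorem lets us replace $\eta^n_{t^n_k}\otimes\eta^n_{t^n_k}$ by $\1_{[t^n_k,T]}\otimes\1_{[t^n_k,T]}$ modulo a vanishing error. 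The resulting sum is a weighted Riemann-type sum of the continuous function $s\mapsto\langle\DD^2_{i,j}F(s,X^s),\1_{[s,T]}\otimes\1_{[s,T]}\rangle$ against the increments $X^i_{t^n_k,t^n_{k+1}}X^j_{t^n_k,t^n_{k+1}}$; by the F\"ollmer quadratic-variation hypothesis (cf.\ the uniform convergence remark following the definition of quadratic variation), it converges to $\int_0^t\langle\DD^2_{i,j}F(s,X^s),\1_{[s,T]}\otimes\1_{[s,T]}\rangle\dd[X^i,X^j]_s$. Combined with the trivial LHS limit $F(t,X^{n,t})\to F(t,X^t)$, this identifies the first-order Riemann sum with $F(t,X^t)-F(0,X^0)-\int_0^t\partial_t F(s,X^s)\dd s-\tfrac{1}{2}\sum_{i,j}\int_0^t\langle\DD^2_{i,j}F(s,X^s),\1_{[s,T]}\otimes\1_{[s,T]}\rangle\dd[X^i,X^j]_s$, proving both that the integral on the RHS of \eqref{eq:functionalIto} exists and that the formula holds. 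I expect the main obstacle to be the replacement of $\eta^n_{t^n_k}\otimes\eta^n_{t^n_k}$ by $\1_{[t^n_k,T]}\otimes\1_{[t^n_k,T]}$ inside $\DD^2 F$, since $\1_{[t,T]}\notin\mathcal{C}$; this step relies essentially on the continuous bidual extension of $\DD^2 F$ prepared in the preamble to the theorem, which in turn uses both uniform continuity and boundedness of $\DD^2 F$.
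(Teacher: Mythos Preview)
Your proposal is correct and follows essentially the same route as the paper: telescope along the piecewise linear approximation $X^n$, split each summand into a time part and a path part, Taylor-expand, kill the remainder using the quadratic-variation hypothesis, identify the second-order sum with the $\dd[X^i,X^j]$-integral via the replacement $\eta^n_{t^n_k}\otimes\eta^n_{t^n_k}\rightsquigarrow\1_{[t^n_k,T]}\otimes\1_{[t^n_k,T]}$, and deduce convergence of the first-order sum \emph{a posteriori}. The only slip is your claim that $s\mapsto\langle\DD^2_{i,j}F(s,X^s),\1_{[s,T]}\otimes\1_{[s,T]}\rangle$ is continuous: in general it is only \emph{left}-continuous (since $s\mapsto\1_{[s,T]}$ fails to be right-continuous in the weak-$*$ sense), and the paper handles exactly this point by invoking Proposition~2.1 of \cite{Ahn1997} for left-continuity together with Lemma~12 of \cite{Cont2010a} for the convergence of Riemann sums with left-continuous integrands against $\dd[X]$.
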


\begin{proof}
  To increase the readability of the proof, we assume $d=1$. The general result follows analogously. Let $t \in [0,T]$ and $(\pi_n)$ a sequence of partitions fulfilling the assumption of Theorem \ref{thm:functionalIto}. We easily see that
  \begin{align}\label{eq:telescop}
    F(t,X^{n,t}) - & F(0,X^{n,0})\nonumber\\
    &= \sum_{[t^n_k,t_{k+1}^n]\in \pi_n(t)} \big(F(t^n_{k+1},X^{n,t^n_{k+1}}) -F(t^n_k,X^{n,t^n_{k+1}})\nonumber\\
    &\qquad\qquad\qquad\qquad\qquad +  F(t^n_{k},X^{n,t^n_{k+1}})-F(t^n_{k},X^{n,t^n_k})\big)
  \end{align}
  and note that the right hand side converges uniformly to $F(t,X^{t}) - F(0,X^{0})$ as $n\to \infty$. Applying a Taylor expansion, we obtain
  \begin{equation*}
    F(t^n_{k+1},X^{n,t^n_{k+1}}) -F(t^n_k,X^{n,t^n_{k+1}})= \partial_t F(t^n_k,X^{n,t^n_{k+1}}) (t^n_{k+1}- t^n_{k}) + R(t^n_{k},t^n_{k+1}),
  \end{equation*}
  where one has $|R(t^n_{k},t^n_{k+1})|\leq \phi_1(|t^n_{k+1}-t^n_{k}|)|t^n_{k+1}-t^n_{k}|$, for some continuous function $\phi_1 \colon [0,\infty)\to \mathbb{R}$ such that $\phi_1 (c)\to 0$ as $c\to 0$. With this observation and the continuity of $\partial_t F(s,X^s)$, we conclude by dominated convergence that
  \begin{equation*}
    \lim_{n \to \infty }\sum_{[t^n_k,t_{k+1}^n]\in \pi_n(t)} \big(F(t^n_{k+1},X^{n,t^n_{k+1}})-F(t^n_k,X^{n,t^n_{k+1}}) \big)=\int_0^t \partial_t F(s,X^s) \dd s.
  \end{equation*}
  For the second difference of equation \eqref{eq:telescop}, we use a second order Taylor expansion to get
  \begin{align*}
   &\sum_{[t^n_k,t_{k+1}^n]\in \pi_n(t)} F(t^n_{k},X^{n,t^n_{k+1}})-F(t^n_{k},X^{n,t^n_k})\\
    &= \sum_{[t^n_k,t_{k+1}^n]\in \pi_n(t)}\langle\DD F(t^n_{k},X^{n,t^n_k}),X^{n,t^n_{k+1}}-X^{n,t^n_k}\rangle\\
    &\quad+ \sum_{[t^n_k,t_{k+1}^n]\in \pi_n(t)}\frac{1}{2}\langle\DD^2 F(t^n_{k},X^{n,t^n_k}),(X^{n,t^n_{k+1}}-X^{n,t^n_k})\otimes (X^{n,t^n_{k+1}}-X^{n,t^n_k})\rangle\\
    &\quad+\sum_{[t^n_k,t_{k+1}^n]\in \pi_n(t)} \tilde R(X^{n,t^n_k},X^{n,t^n_{k+1}})=:S_n^1(t)+S_n^2(t)+S_n^3(t),
  \end{align*}
  where $|\tilde R(X^{n,t^n_k},X^{n,t^n_{k+1}}) |\leq \phi_2 (\|X^{n,t^n_{k+1}}-X^{n,t^n_{k}}\|_\infty)\|X^{n,t^n_{k+1}}-X^{n,t^n_k}\|_{\infty}^2$, for some continuous function $\phi_2 \colon \mathbb{R}\to \mathbb{R}$ such that $\phi_2(c) \to 0$ as $c \to 0$.
  Since $X^{n,t^n_{k+1}}-X^{n,t^n_k}= \eta^n_{t^n_j} X_{t^n_k,t^n_{k+1}}$ and $[\cdot, \cdot]$ is bilinear, $S_n^1$ and $S_n^2$ can be rewritten by
  \begin{align*}
    S^1_n(t) &= \sum_{[t^n_k,t_{k+1}^n]\in \pi_n(t)} \langle\DD F(t^n_{k},X^{n,t^n_k}),\eta^n_{t^n_j}\rangle X_{t^n_k,t^n_{k+1}}, \\
    S^2_n(t) &= \sum_{[t^n_k,t_{k+1}^n]\in \pi_n(t)} \langle\DD^2 F(t^n_{k},X^{n,t^n_k}),\eta^n_{t^n_j}\otimes\eta^n_{t^n_j}\rangle X_{t^n_k,t^n_{k+1}}^2,\\
  \end{align*}
  and $S_n^3$ estimated by
  \begin{equation*}
    \sup_{t\in [0,T]}|S^3_n(t)|\leq \max_{[t^n_k,t_{k+1}^n]\in \pi_n(t)} \phi_2(|X_{t^n_k,t^n_{k+1}}|) \sum_{[t^n_k,t_{k+1}^n]\in \pi_n(t)}  X_{t^n_k,t^n_{k+1}}^2.
  \end{equation*}
  Because $X$ has quadratic variation along $(\pi_n)$ and $\phi_2 (|X_{t^n_k,t^n_{k+1}}|) \to 0$ as $n\to \infty$, $S^3_n (\cdot)$ tends uniformly to zero. To see the convergence of $S_n^2(t)$, we set $\lambda_n(s):=\max \{t^n_j \,:\, [t_j^n,t_{j+1}^n]\in \pi_n,\, t^n_j\leq s\}$ and define
  \begin{align*}
    &f_n(s):= \langle\DD^2 F(\lambda_{n(s)}, X^{n,\lambda_n(s)}), \eta^n_{\lambda_n(s)} \otimes \eta^n_{\lambda_n(s)}\rangle,\quad \text{and}\\
    &f(s):=\langle\DD^2 F(s,X^s),\1_{[s,T]}\otimes \1_{[s,T]}\rangle,\quad s \in [0,T].
  \end{align*}
  Note that $(f_n)$ is a sequence of left-continuous functions which are uniformly bounded in $n$. Additionally, $\lim_{n \to \infty} f_n(s)=f(s)$ for each $s\in [0,T]$ as
  \begin{align*}
   \lim_{n\to \infty} |f_n(s)- &f(s)|\leq   \lim_{n\to \infty} \big| \langle\DD^2 F(\lambda_{n(s)}, X^{n,\lambda_n(s)}), \eta^n_{\lambda_n(s)} \otimes \eta^n_{\lambda_n(s)} - \1_{[s,T]}\otimes \1_{[s,T]}\rangle\big| \\
     &+ \lim_{n\to \infty}  \big| \langle\DD^2 F(\lambda_{n(s)}, X^{n,\lambda_n(s)})-\DD^2 F(s, X^{s}),  \1_{[s,T]}\otimes \1_{[s,T]}\rangle\big|=0.
  \end{align*}
  The first summand tends to zero by weak convergence of $\eta^n_{\lambda_n(s)} \otimes \eta^n_{\lambda_n(s)}$ to $\1_{[s,T]}\otimes \1_{[s,T]}$, and the second one by Lemma 3.2 in \cite{Ahn1997}. By Proposition 2.1 in \cite{Ahn1997} $f$ is also left-continuous and so Lemma 12 in \cite{Cont2010a} implies
  \begin{equation*}
    \lim_{n \to \infty} S^2_n(t)= \int_0^t \langle\DD^2 F(s,X^s),\1_{[s,T]}\otimes \1_{[s,T]}\rangle\dd [X]_s.
  \end{equation*}
  In summary, we derived equation \eqref{eq:functionalIto} and implicitly the convergence of $S^1_n(t)$.
\end{proof}

It is fairly easy to see that $\langle\DD F(t,X^t),\1_{[t,T]}\rangle$ is in general not controlled by a path increment of $X$, which we briefly illustrate by revisiting Example 2.3 in \cite{Ahn1997}. Especially, this explains why we cannot just rely on Theorem \ref{thm:foll} to prove Theorem \ref{thm:functionalIto}.

\begin{example}
  Let $\mu$ be a finite signed Borel measure and let $F\colon C([0,T],\R)\to \R$ be given by
  \begin{equation*}
    F(X):=\int_0^T g(s,X_s)\,\mu(\mathrm{d} s),
  \end{equation*}
  where $g(t,\cdot)\in C^2(\R,\R)$ for each $t \in [0,T]$ with bounded second partial derivatives $\DD_{x,x}^2 g$ and $g(\cdot,x)\colon [0,T]\to \R$ $\mu$-measurable. In this case $\langle\1_{[t,T]},\DD F(X^t)\rangle$ is of course in general not controlled by a path increment of $X$ as we see from the explicit calculation
  \begin{equation*}
     \langle \DD F(X^t), \1_{[t,T]} \rangle- \langle \DD F(X^s),\1_{[s,T]}\rangle  =- \int_s^t \DD_x g(u,X_u)\, \mu(\mathrm{d}u),\quad 0\leq s\leq t\leq T.
  \end{equation*}
\end{example}

\par\bigskip\noindent
{\bf Acknowledgment.} The authors are grateful to Randolf Altmeyer and Nicolas Perkowski for helpful comments and discussions on the subject matter. D.J.P. was financially supported by a Ph.D. scholarship of the DFG Research Training Group 1845 ``Stochastic Analysis with Applications in Biology, Finance and Physics''.

\bibliographystyle{amsplain}

\begin{thebibliography}{99}


\bibitem[1]{Ahn1997}
  Ahn, H.: Semimartingale integral representation, \emph{Ann. Probab.}
  \textbf{25} (1997), no.~2, 997--1010.

\bibitem[2]{Cont2010}
  Cont, R. and Fourni{\'e}, D.-A.: {A functional extension of the
  Ito formula},\emph{ C. R. Math. Acad. Sci. Paris} \textbf{348} (2010), no.~1-2,
  57--61.

\bibitem[3]{Cont2010a}
  Cont, R. and Fourni{\'e}, D.-A.: {C}hange of variable formulas for non-anti-cipative functionals on
  path space, \emph{J. Funct. Anal.} \textbf{259} (2010), no.~4, 1043--1072.

\bibitem[4]{Cont2013}
  Cont, R. and Fourni{\'e}, D.-A.: Functional {I}t\^o calculus and stochastic integral
  representation of martingales, \emph{Ann. Probab.} \textbf{41} (2013), no.~1,
  109--133.

\bibitem[5]{Cosso2014}
  Cosso, A. and Russo, F.: A regularization approach to functional
  {I}t\^o calculus and strong-viscosity solutions to path-dependent {PDE}s,
  \emph{Preprint} arXiv:1401.5034 (2014).

\bibitem[6]{Dupire2009}
  Dupire, B.: Functional {I}t{\^o} {C}alculus, \emph{Bloomberg Portfolio
  Research Paper} (2009), no.~2009-04-FRONTIERS, Available at SSRN: http://ssrn.com/abstract=1435551.

\bibitem[7]{Feyel2006}
  Feyel, D. and {De La Pradelle}, A.: {Curvilinear integrals along
  enriched paths}, \emph{{Electron. J. Probab.}} \textbf{11} (2006), 860--892.

\bibitem[8]{Feyel2008}
  Feyel, D., {De La Pradelle}, A. and Mokobodzki, G.: {A
  non-commutative sewing lemma}, \emph{{Electron. Commun. Probab.}} \textbf{13}
  (2008), 24--34.

\bibitem[9]{Friz2013}
  Friz, P. and Hairer, M.: \emph{{A {C}ourse on {R}ough {P}aths: {W}ith an
  {I}ntroduction to {R}egularity {S}tructures}}, Springer, 2014.

\bibitem[10]{Follmer1979}
  F{\"o}llmer, H.: {Calcul d'It{\^o} sans probabilit{\'e}s},
  \emph{S{\'e}minaire de Probabilit{\'e}s XV} \textbf{80} (1979), 143--150.

\bibitem[11]{Freedman1983}
  Freedman, D.: \emph{Brownian motion and diffusion}, second ed.,
  Springer-Verlag, New York, 1983.

\bibitem[12]{Gubinelli2013} 
  Gubinelli, M., Imkeller, P. and Perkowski, N.:
  {Paracontrolled distributions and singular PDEs}, to appear in \emph{Forum of Mathematics, Pi}, Preprint
  arXiv:1210.2684 (2013).

\bibitem[13]{Gubinelli2014}
  Gubinelli, M., Imkeller, P. and Perkowski, N.: {A Fourier Approach to pathwise stochastic integration},
  \emph{Preprint} arXiv:1410.4006 (2014).

\bibitem[14]{Gubinelli2004}
  Gubinelli, M.: {Controlling rough paths}, \emph{J. Funct. Anal.}
  \textbf{216} (2004), no.~1, 86--140.

\bibitem[15]{Lyons2007}
  Lyons, T.~J., Caruana, M. and  L{\'e}vy, T.: \emph{{Differential
  equations driven by rough paths}}, {Lecture Notes in Mathematics}, vol. 1908,
  Springer, Berlin, 2007.

\bibitem[16]{Lejay2009}
  Lejay, A.: \emph{Yet another introduction to rough paths}, S\'eminaire de
  {P}robabilit\'es {XLII}, Lecture Notes in Math., vol. 1979, Springer, Berlin,
  2009, pp.~1--101.

\bibitem[17]{Levy1940}
  L{\'e}vy, P.: {Le mouvement brownien plan}, \emph{Amer. J. Math.} \textbf{62}
  (1940), 487--550.

\bibitem[18]{Lyons1998}
  Lyons, T.~J.: {Differential equations driven by rough signals}, \emph{Rev.
  Mat. Iberoam.} \textbf{14} (1998), no.~2, 215--310.

\bibitem[19]{Perkowski2013a}
  Perkowski, N.S.: \emph{Studies of robustness in stochastic analysis and
  mathematical finance}, Ph.D. thesis, Humboldt-Universit\"at zu Berlin, 2014.

\bibitem[20]{Perkowski2013}
  Perkowski, N. and Pr{\"o}mel, D.~J.: {Pathwise stochastic integrals
  for model free finance}, \emph{Preprint} arXiv:1311.6187 (2013).

\bibitem[21]{Sondermann2006}
  Sondermann, D.: \emph{Introduction to stochastic calculus for finance: a new
  didactic approach}, vol. 579, Springer, 2006.

\bibitem[22]{Young1936}
  Young, L.~C.: {An inequality of the H{\"o}lder type, connected with
  Stieltjes integration}, \emph{Acta Math.} \textbf{67} (1936), no.~1, 251--282.

\end{thebibliography}

\end{document}